\def\@settitle{\begin{center}%
		\baselineskip14\p@\relax
		\normalfont\LARGE\scshape\bfseries
		\@title
	\end{center}%
}
\def\subsection{\@startsection{subsection}{2}%
	\z@{.5\linespacing\@plus.7\linespacing}{.5\linespacing}%
	{\normalfont\large\bfseries}}
\def\subsubsection{\@startsection{subsubsection}{3}%
	\z@{.5\linespacing\@plus.7\linespacing}{.5\linespacing}%
	{\normalfont\itshape}}
\definecolor{darkblue}{rgb}{0.0, 0.0, 0.45}
\date{\today}
\theoremstyle{theorem}
\newtheorem{Thm}{Theorem}[section]
\newtheorem{Lem}[Thm]{Lemma}
\newtheorem{Cor}[Thm]{Corollary}
\newtheorem{As}[Thm]{Assumption}
\newtheorem{Prob}[Thm]{Problem}
\newtheorem{Def}[Thm]{Definition}
\newtheorem{Rem}[Thm]{Remark}
\theoremstyle{remark}
\newtheorem{Ex}{Example}
\newcommand{\ol}[1]{\overline{#1}}
\newcommand{\ul}[1]{\underline{#1}}
\title[Continuous-Time Accelerated Methods via a Hybrid Control Lens]
{Continuous-Time Accelerated Methods via a Hybrid Control Lens}
\author[A. Sharifi Kolarijani, P. Mohajerin Esfahani, T. Keviczky]{Arman Sharifi Kolarijani, Peyman Mohajerin Esfahani, Tam\'{a}s Keviczky}
	\thanks{The authors are with the Delft Center for Systems and Control, TU Delft, The Netherlands ({\tt \{a.sharifikolarijani,p.mohajerinesfahani,t.keviczky\}@tudelft.nl}).}
\begin{document}
\maketitle

\begin{abstract}
Treating optimization methods as dynamical systems can be traced back centuries ago in order to comprehend the notions and behaviors of optimization methods. Lately, this mind set has become the driving force to design new optimization methods. Inspired by the recent dynamical system viewpoint of Nesterov's fast method, we propose two classes of fast methods, formulated as hybrid control systems, to obtain pre-specified exponential convergence rate. Alternative to the existing fast methods which are parametric-in-time second order differential equations, we dynamically synthesize feedback controls in a state-dependent manner. Namely, in the first class the damping term is viewed as the control input, while in the second class the amplitude with which the gradient of the objective function impacts the dynamics serves as the controller. The objective function requires to satisfy the so-called Polyak--{\L}ojasiewicz inequality which effectively implies no local optima and a certain gradient-domination property. Moreover, we establish that both hybrid structures possess Zeno-free solution trajectories. We finally provide a mechanism to determine the discretization step size to attain an exponential convergence rate.
\end{abstract}

\section{Introduction}
\label{sec:intro}
There is a renewed surge of interest in gradient-based algorithms in many computational communities such as machine learning and data analysis. The following non-exhaustive list of references indicates typical application areas: clustering analysis \cite{lashkari2008convex}, neuro-computing \cite{bottou1991stochastic}, statistical estimation \cite{salakhutdinov2003optimization}, support vector machines \cite{allen2016katyusha}, signal and image processing \cite{becker2011nesta}, and networked-constrained optimization \cite{ghadimi2013multi}. This interest primarily stems from low computational and memory loads of these algorithms (making them exceptionally attractive in large-scale problems where the dimension of decision variables can be enormous). As a result, a deeper understating of how these algorithms function has become a focal point of many studies. 

One research direction that has been recently revitalized is the application of ordinary differential equations (ODEs) to the analysis and design of optimization algorithms. Consider an iterative algorithm that can be viewed as a discrete dynamical system, with the scalar $s$ as its step size. As $s$ decreases, one can observe that the iterative algorithm in fact recovers a differential equation, e.g., in the case of gradient descent method applied to an unconstrained optimization problem $\min_{X\in\mathbb{R}^n}~{\small f(X)}$, one can inspect that   
\begin{equation*}
\begin{array}{c}
X^{k+1}=X^k-s \nabla  f(X^k) ~ \leadsto ~ \dot{X}(t)=-\nabla f\big(X(t)\big)
\end{array}
\end{equation*}
where $f:\mathbb{R}^n\rightarrow \mathbb{R}$ is a smooth function, $X$ is the decision variable, $k\in \mathbb{Z}_{\geq 0}$ is the iteration index, and $t\in \mathbb{R}_{\geq 0}$ is the time. The main motivation behind this line of research has to do with well-established analysis tools in dynamical systems described by differential equations. 

The slow rate of convergence of the gradient descent algorithm ($\mathcal{O}(\frac{1}{t})$ in continuous and $\mathcal{O}(\frac{1}{k})$ in discrete time), limits its application in large-scale problems. In order to address this shortcoming, many researchers resort to the following class of 2nd-order ODEs, which is also the focus of this study:
\begin{equation}
\label{dyn2}
\ddot{X}(t)+\gamma(t)\dot{X}(t)+\nabla f\big(X(t)\big)=0.
\end{equation}
Increasing the order of the system dynamics interestingly helps improve the convergence rate of the corresponding algorithms to $\mathcal{O}(\frac{1}{k^2})$ in the discrete-time domain or to $\mathcal{O}(\frac{1}{t^2})$ in the continuous-time domain. Such methods are called \emph{momentum}, \emph{accelerated}, or \emph{fast} gradient-based iterative algorithms in the literature. The time-dependent function $\gamma:\mathbb{R}_{\geq 0}\rightarrow \mathbb{R}_{>0}$ is a \emph{damping} or a \emph{viscosity} term, which has also been referred to as the \emph{asymptotically vanishing viscosity} since $\lim_{t\rightarrow \infty}~\gamma(t)=0$ \cite{Cabot2004steepest}. 

\textbf{Chronological developments of fast algorithms:} 
It is believed that the application of (\ref{dyn2}) to speed-up optimization algorithms is originated from \cite{Polyak1964} in which Polyak was inspired by a physical point of view (i.e., a heavy-ball moving in a potential field). Later on, Nesterov introduced his celebrated accelerated gradient method in \cite{Nesterov1983} using the notion of ``{estimate sequences}" and guaranteeing convergence rate of $\mathcal{O}(\frac{1}{k^2})$. 
Despite several extensions of Nesterov's method  \cite{Nesterov2004,Nesterov2005smooth,Nesterov2013gradient}, the approach has not yet been fully understood.  In this regard, many have tried to study the intrinsic properties of Nesterov's method such as \cite{Drusvyatskiy2016,Bubeck2015geometric,Drori2014,Lessard2016}. Recently, the authors in \cite{Su2014differential} and in details \cite{Su2016differential} surprisingly discovered that Nesterov's method recovers (\ref{dyn2}) in its continuous limit, with the time-varying damping term $\gamma (t)=\frac{3}{t}$.

\textbf{A dynamical systems perspective:} Based on the observation suggested by \cite{Su2014differential}, several novel fast algorithms have been developed. Inspired by the mirror descent approach \cite{nemirovskii1983problem}, the ODE (\ref{dyn2}) has been extended to non-Euclidean settings using the Bregman divergence in \cite{krichene2015accelerated}. 
Then, the authors in \cite{Wibisono2016variational} further generalized the approach in \cite{krichene2015accelerated} to higher order methods using instead the Bregman Lagrangian. 
Following \cite{Wibisono2016variational}, a ``{rate-matching}" Lyapunov function is proposed in \cite{wilson2016lyapunov} with its monotonicity property established for both continuous and discrete dynamics. Recently, the authors in \cite{Lessard2016} make use of an interesting semidefinite programming framework developed by \cite{Drori2014} and use tools from robust control theory to analyze the convergence rate of optimization algorithms. More specifically, the authors exploit the concept of integral quadratic constraints (IQCs) \cite{megretski1997system} to design iterative algorithms under the strong convexity assumption. Later, the authors in \cite{fazlyab2017analysis} extend the results of IQC-based approaches to quasi-convex functions. The authors in \cite{hu2017dissipativity} use dissipativity theory \cite{willems1972dissipative} along with the IQC-based analysis to construct Lyapunov functions enabling rate analyses. 
In \cite{attouch2016fast}, the ODE~\eqref{dyn2} is amended with an extra Hessian driven damping $\beta \nabla^2 f(X(t))$ for some positive scalar $\beta$. It is shown that the proposed dynamics can be generalized to the case of lower-semicontinuous functions via an appropriate reparameterization of the dynamics. 
The authors in \cite{krichene2016adaptive} propose an averaging approach to construct a broad family of fast mirror descent methods. They also introduce a state-dependent, heuristic method to adaptively update the averaging function.

\textbf{Restarting schemes:} A characteristic feature of fast methods is the non-monotonicity in the suboptimality measure $f-f^*$, where $f^*$ refers to the optimal value of function $f$. The reason behind such an undesirable behavior can be intuitively explained in two ways: (i) a momentum based argument indicating as the algorithm evolves, the algorithm's momentum gradually increases to a level that it causes an oscillatory behavior \cite{o2015adaptive}; (ii) an acceleration-based argument indicating that the asymptotically vanishing damping term becomes so small that the algorithm's behavior drifts from an over-damped regime into an under-damped regime with an oscillatory behavior \cite{Su2016differential}. To prevent such an undesirable behavior in fast methods, an optimal fixed restart interval is determined in terms of the so-called condition number of function $f$ such that the momentum term is restarted to a certain value, see e.g., \cite{Nesterov2004,nemirovski2005efficient,gu2013parnes,lan2013iteration,Nesterov2013gradient}.
It is worth mentioning that \cite{o2015adaptive} proposes two heuristic adaptive restart schemes. It is numerically observed that such restart rules practically improve the convergence behavior of a fast algorithm.

\textbf{Regularity for exponential convergence:} 
Generally speaking, exponential convergence rate and the corresponding regularity requirements of the function $f$ are two crucial metrics in fast methods. In what follows, we discuss about these metrics for three popular fast methods in the literature. 
(Notice that these fast methods are in general designed for wider classes of functions and not limited to the specific cases reported below.) 
When the objective functions are strongly convex with a constant $\sigma_f$ and their gradient is Lipschitz with a constant $L_f$, \cite{Su2016differential} proposes the ``{speed restarting}" scheme
\begin{equation*}
\text{sup}\Big\{ t>0:~\forall \tau\in(0,t),{\small \frac{d\| \dot{X}(\tau)  \|^2}{d\tau}}>0 \Big\},
\end{equation*}
to achieve the convergence rate of:
\begin{equation*}
f\big(X(t)\big)-f^* \leq d_1 e^{-d_2 t} \| X(0) -X^* \|^2.
\end{equation*}
The positive scalars $d_1$ and $d_2$ depend on the constants $\sigma_f$ and $L_f$. 
Assuming the convexity of the function $f$ with a certain choice of parameters in their ``{ideal scaling}" condition, \cite{Wibisono2016variational} uses the dynamics
\begin{align*}
\ddot{X}(t)+c\dot{X}(t)
+c^2e^{ct} \Big( \nabla^2 h\big(X(t)+\frac{1}{c}\dot{X}(t)\big) \Big)^{-1}\nabla f\big(X(t)\big)=0,
\end{align*}
and guarantees the convergence rate of $\mathcal{O}(e^{-ct})$ for some positive scalar $c$, 
where the function $h$ is a distance generating function. 
Under uniform convexity assumption with a constant $\nu_f$, it is further shown that
\begin{equation*}
f\big(X(t)\big)-f^* \leq \Big(f\big(X(0)\big)-f^*\Big) e^{-\nu_f \frac{1}{p-1}t}.
\end{equation*}
where $p-1$ is the order of smoothness of $f$. The authors in \cite{wilson2016lyapunov} introduce the Lyapunov function
\begin{equation*}
\mathcal{E}(t)=e^{\beta(t)}\left( f\big(X(t)\big)-f^*+\frac{\sigma_f}{2} \| X^*-Z(t)  \|^2  \right),
\end{equation*}
to guarantee the rate of convergence
 \begin{equation*}
 \mathcal{E}(t)  \leq \mathcal{E}(0) e ^{-\int \dot{\beta}(s) ds},
 \end{equation*}
where $Z(t)=X(t)+\frac{1}{\dot{\beta}(t)}\dot{X}$, $\dot{Z}(t)=-\dot{X}(t)-\frac{1}{\sigma_f} \dot{\beta}(t)\nabla f\big(X(t) \big)$, and $\beta(t)$ is a user-defined function.

\textbf{Statement of hypothesis:} Much of the references reviewed above (excluding, e.g., \cite{attouch2016fast} and \cite{krichene2016adaptive}) primarily deal with constructing a time-dependent damping term $\gamma(t)$ that
is sometimes tied to a Lyapunov function. Furthermore, due to underlying oscillatory behavior of the corresponding 2nd-order ODE, researchers utilize restarting schemes to over-write the steady-state non-monotonic regime with the transient monotonic regime of the dynamics. In general, notice that these schemes are based on time-dependent schedulers.

With the above argument in mind, let us view an algorithm as a unit point mass moving in a potential field caused by an objective function $f$ under a parametric (or possibly constant) viscosity, similar to the second order ODE~\eqref{dyn2}. In this view, we aim to address the following two questions:
\begin{flushleft}
Is it possible to
\begin{enumerate}[label=(\Roman*)]
\item synthesize the damping term $\gamma$ as a state-dependent term (i.e., $\gamma(X,\dot X)$), or
\item dynamically control the magnitude of the potential force $\nabla f(X)$,
\end{enumerate}
such that the underlying properties of the optimization algorithm are improved?
\end{flushleft}

\textbf{Contribution:} In this paper, we answer these questions by amending the 2nd-order ODE~\eqref{dyn2} in two ways as follows:
\begin{align*}
\text{(I)} &~ \ddot{X}(t)+u_{\textbf{I}}\big(X(t),\dot{X}(t)\big)~\dot{X}(t)+\nabla f(X(t))=0,\\
\text{(II)} &~ \ddot{X}(t)+\dot{X}(t)+u_{\textbf{II}}\big(X(t),\dot{X}(t)\big)~\nabla f(X(t))=0,
\end{align*}
where the indices indicate to which question each structure is related to in the above hypothesis. Evidently, in the first structure, the state-dependent input $u_{\textbf{I}}$ replaces the time-dependent damping $\gamma$ in (\ref{dyn2}). While in the second structure, the feedback input $u_{\textbf{II}}$ dynamically controls the magnitude with which the potential force enters the dynamics (we assume for simplicity of exposition that $\gamma (t)=1$, however, one can modify our proposed framework and following a similar path develop the corresponding results for the case $\gamma(t)\neq 1$). 
Let $f$ be a twice differentiable function that satisfies the so-called Polyak--{\L}ojasiewicz (PL) inequality (see Assumption~(\ref{d_1})). 
Given a positive scalar $\alpha$, we seek to achieve an exponential rate of convergence $\mathcal{O}(e^{-\alpha t})$ for an unconstrained, smooth optimization problem in the suboptimality measure $f\big(X(t)\big)-f^*$. To do so, we construct the state-dependent feedback laws for each structure as follows:
\begin{align*}
u_{\textbf{I}}\big(X(t),\dot{X}(t)\big) := 
 \alpha + \frac{\| \nabla f(X(t)) \|^2 - \langle \nabla^2 f\big(X(t)\big) \dot{X}(t), \dot{X}(t) \rangle}{\langle \nabla f\big(X(t)\big), -\dot{X}(t)  \rangle},
\end{align*}
\begin{align*}
u_{\textbf{II}}\big(X(t),\dot{X}(t)\big) := 
 \frac{  \langle \nabla^2 f\big(X(t)\big) \dot{X}(t), \dot{X}(t) \rangle +(1 - \alpha) \langle \nabla f\big(X(t)\big), -\dot{X}(t)  \rangle}{\| \nabla f(X(t)) \|^2 }.
\end{align*}
Motivated by restarting schemes, we further extend the class of dynamics to hybrid control systems (see Definition \ref{def_hyb} for further details) in which both of the above ODE structures play the role of the \emph{continuous flow} in their respective hybrid dynamical extension. We next suggest an admissible control input range $[u_{\min},u_{\max}]$ that determines the \emph{flow set} of each hybrid system. Based on the model parameters $\alpha$, $u_{\min}$, and $u_{\max}$, we then construct the \emph{jump map} of each hybrid control system by the mapping $\big(X^\top,-\beta \nabla^\top f(X)\big)^\top$ guaranteeing that the range space of the jump map is contained in its respective flow set. Notice that the velocity restart schemes take the form of $\dot{X}=-\beta \nabla f(X)$. 

This paper extends the results of \cite{armanICML} in several ways which are summarized as follows:
\begin{itemize}
	\item We synthesize a state-dependent gradient coefficient  ($u_{\textbf{II}}(x)$) given a prescribed control input bound and a desired convergence rate (Theorem~\ref{theo_step_conv}). This is a complementary result to our earlier study [30] which is concerned with a state-dependent damping coefficient ($u_{\textbf{I}}(x)$). Notice that the state-dependent feature of our proposed dynamical systems differs from commonly time-dependent methodologies in the literature.

	\item We derive a lower bound on the time between two consecutive jumps for each hybrid structure. This ensures that the constructed hybrid systems admit the so-called Zeno-free solution trajectories. It is worth noting that the regularity assumptions required by the proposed structures are different (Theorems~\ref{theo_zeno} and \ref{theo_step_zeno}).
	
	\item The proposed frameworks are general enough to include a subclass of non-convex problems. Namely, the critical requirement is that the objective function $f$ satisfies the Polyak--{\L}ojasiewicz (PL) inequality (Assumption~(\ref{d_1})), which is a weaker regularity assumption than the strong convexity that is often assumed in this context for exponential convergence. 

	\item We utilize the \emph{forward-Euler} method to discretize both hybrid systems (i.e., obtain optimization algorithms). We further provide a mechanism to compute the step size such that the corresponding discrete dynamics have an exponential rate of convergence (Theorem~\ref{theo_2}). 
\end{itemize}

The remainder of this paper is organized as follows. In Section~\ref{sec:notation}, the mathematical notions are represented. The main results of the paper are introduced in Section~\ref{sec:mainres}. Section~\ref{sec:proofs} contains the proofs of the main results. We introduce a numerical example in Section~\ref{sec:examp}. This paper is finally concluded in Section~\ref{sec:conc}.

\textbf{Notations:} The sets $\mathbb{R}^n$ and $\mathbb{R}^{m\times n}$ denote the $n$-dimensional Euclidean space and the space of $m\times n$ dimensional matrices with real entries, respectively. For a matrix $M\in\mathbb{R}^{m\times n}$, $M^\top$ is the transpose of $M$, $M\succ0$ ($\prec0$) refers to $M$ positive (negative) definite, $M\succeq0$ ($\preceq0$) refers to $M$ positive (negative) semi-definite, and $\lambda_{\max}(M)$ denotes the maximum eigenvalue of $M$. The $n\times n$ identity matrix is denoted by $I_n$. For a vector $v\in\mathbb{R}^n$ and $i\in\{1,\cdots,n \}$, $v_i$ represents the $i$-th entry of $v$ and $\| v \|:=\sqrt{\Sigma_{i=1}^n~v_i^2}$ is the Euclidean 2-norm of $v$.  For two vectors $x,y\in\mathbb{R}^n$, $\langle x,y \rangle:=x^\top y$ denotes the Euclidean inner product. For a matrix $M$, $\| M \|:=\sqrt{\lambda_{\max}(M^\top M)}$ is the induced 2-norm. Given the set $S\subseteq \mathbb{R}^n$, $\partial S$ and $\text{int}(S)$ represent the boundary and the interior of $S$, respectively.

\section{Preliminaries}
\label{sec:notation}
We briefly recall some notions from hybrid dynamical systems that we will use to develop our results. 
We state the standing assumptions related to the optimization problem to be tackled in this paper. The problem statement is then introduced. 
We adapt the following definition of a hybrid control system from \cite{goebel2012hybrid} that is sufficient in the context of this paper. 
\begin{Def}[Hybrid control system]
\label{def_hyb}
A time-invariant hybrid control system $\mathcal{H}$ comprises a controlled ODE and a jump (or a reset) rule introduced as: 
\begin{equation}
\tag{$\mathcal{H}$}
\label{p1}
\left\lbrace
\begin{array}{lllc}
\dot{x} & = & F\big(x,u(x)\big), & x \in \mathcal{C}\\
x^+ & = & G(x), & \text{otherwise},
\end{array}
\right.
\end{equation}
where $x^+$ is the state of the hybrid system after a jump, the function $u:\mathbb{R}^n\rightarrow\mathbb{R}^m$ denotes a feedback signal, the function $F:\mathbb{R}^n\times\mathbb{R}^m\rightarrow\mathbb{R}^n$ is the flow map, the set $\mathcal{C}\subseteq \mathbb{R}^n$ is the flow set, and the function $G:\partial \mathcal{C}\rightarrow$ \emph{int}$(\mathcal{C})$ represents the jump map.
\end{Def}
Notice that the jump map $G(x)$ will be activated as soon as the state $x$ reaches the boundary of the flow set $\mathcal{C}$, that is $\partial \mathcal{C}$. 
In hybrid dynamical systems, the notion of \emph{Zeno behavior} refers to the phenomenon that an infinite number of jumps occur in a bounded time interval. We then call a solution trajectory of a hybrid dynamical system Zeno-free if the number of jumps within any finite time interval is bounded. 
The existence of a lower bound on the time interval between two consecutive jumps suffices to guarantee the Zeno-freeness of a solution trajectory of a hybrid control system. Nonetheless, there exist solution concepts in the literature that accept Zeno behaviors, see for example \cite{aubin2002impulse,goebel2012hybrid,goebel2006solutions,lygeros2003dynamical} and the references therein.

Consider the following class of unconstrained optimization problems:
\begin{equation}
\label{pf1}
f^*:=\underset{X\in\mathbb{R}^n}{\min} f(X),
\end{equation}
where $f:\mathbb{R}^n\rightarrow \mathbb{R}$ is an objective function. 

\begin{As}[Regularity assumptions]
\label{def_10}
We stipulate that the objective function $f:\mathbb{R}^n\rightarrow \mathbb{R}$ is twice differentiable and fulfills the following
\begin{itemize}
\item (Bounded Hessian) The Hessian of function $f$, denoted by $\nabla^2 f(x)$, is uniformly bounded, i.e.,
\begin{equation}
\tag{A1}
\label{p2}
-\ell_f I_n \preceq \nabla^2 f(x) \preceq L_f I_n,
\end{equation}
where $\ell_f$ and $L_f$ are non-negative constants.
\end{itemize}
\begin{itemize}
\item (Gradient dominated) The function $f$ satisfies the Polyak-{\L}ojasiewicz inequality with a positive constant $\mu_f$, i.e., for every $x$ in $\mathbb{R}^n$ we have
\begin{equation}
\tag{A2}
\label{d_1}
\frac{1}{2} \big\| \nabla f(x) \big\|^2 \geq \mu_f \big(f(x)-f^*\big),
\end{equation}
where $f^*$ is the minimum value of $f$ on $\mathbb{R}^n$. 
\item (Lipschitz Hessian) The Hessian of the function $f$ is Lipschitz, i.e., for every $x,y$ in $\mathbb{R}^n$ we have
\begin{align}
\tag{A3}
\label{z6}
\big\| \nabla^2 f(x) - \nabla^2 f(y) \big\| \leq H_f \| x - y  \|,
\end{align}
where $H_f$ is a positive constant.
\end{itemize}
\end{As}

We now formally state the main problem to be addressed in this paper:
\begin{Prob}
\label{prob1}
Consider the unconstrained optimization problem (\ref{pf1}) where the objective function $f$ is twice differentiable. Given a positive scalar $\alpha$, design a fast gradient-based method in the form of a hybrid control system~(\ref{p1}) with $\alpha$-exponential convergence rate, i.e. for any initial condition $X(0)$ and any $t \geq 0$ we have
\begin{equation*}
f\big(X(t)\big)-f^*\leq e^{-\alpha t} \Big(f\big(X(0)\big)-f^* \Big), 
\end{equation*}
where $\{X(t)\}_{t \geq0}$ denotes the solution trajectory of the system~(\ref{p1}). 
\end{Prob}

\begin{Rem}[Lipschitz gradient]
\label{rem_lip}
Since the function $f$ is twice differentiable, Assumption~(\ref{p2}) implies that the function $\nabla f$ is also Lipschitz with a positive constant $L_f$, i.e., for every $x, y$ in $\mathbb{R}^n$ we have
\begin{equation}
\label{p2_g}
\big\| \nabla f(x)-\nabla f(y) \big\| \leq L_f \| x-y\|.
\end{equation}
\end{Rem}

We now collect two remarks underlining some features of the set of functions that satisfy (\ref{d_1}).
\begin{Rem}[PL functions and invexity] 
The PL inequality in general does not imply the convexity of a function but rather the invexity of it. The notion of invexity was first introduced by \cite{Hanson1981}. The PL inequality (\ref{d_1}) implies that the suboptimality measure $f-f^*$ grows at most as a quadratic function of $\nabla f$. 
\end{Rem}
\begin{Rem}[Non-uniqueness of stationary points]
While the PL inequality does not require the uniqueness of the stationary points of a function (i.e., $\{x: \nabla f(x)=0 \}$), it ensures that all stationary points of the function $f$ are global minimizers \cite{CravenGlover1985}. 
\end{Rem}
We close our preliminary section with a couple of popular examples borrowed from \cite{Karimi2016}. 
\begin{Ex}[PL functions] 
The composition of a strongly convex function and a linear function satisfies the PL inequality. This class includes a number of important problems such as least squares, i.e., $f(x)=\| Ax -b \|^2 $ (obviously, strongly convex functions also satisfy the PL inequality). Any strictly convex function over a compact set satisfies the PL inequality. As such, the log-loss objective function in logistic regression, i.e., $f(x)=\Sigma_{i=1}^n\log\big(1+\text{exp}(b_ia_i^\top x)\big)$, locally satisfies the PL inequality.
\end{Ex}

\section{Main Results}
\label{sec:mainres}
In this section, the main results of this paper are provided. We begin with introducing two types of structures for the hybrid system~(\ref{p1}) motivated by the dynamics of fast gradient methods \cite{Su2016differential}. Given a positive scalar $\alpha$, these structures, indexed by \textbf{I} and \textbf{II}, enable achieving the rate of convergence $\mathcal{O}(e^{-\alpha t})$ in the suboptimality measure $f\big(x_1(t)\big)-f^*$. We then collect multiple remarks highlighting the shared implications of the two structures along with a naive type of time-discretization for these structures. The technical proofs are presented in Section~\ref{sec:proofs}. For notational simplicity, we introduce the notation $x = (x_1,x_2)$ such that the variables $x_1$ and $x_2$ represent the system trajectories $X$ and $\dot{X}$, respectively. 

\subsection{Structure \textbf{I}: state-dependent damping coefficient}
\label{sec:par_I}
The description of the first structure follows. We start with the flow map $F_{\textbf{I}}:\mathbb{R}^{2n}\times \mathbb{R}\rightarrow\mathbb{R}^{2n}$ defined as 
\begin{subequations}
\label{sH}
\begin{align}
\label{s1}
F_{\textbf{I}}\big(x,u_{\textbf{I}}(x)\big)=
\left(
\begin{aligned}
x& _2\\
-\nabla f &(x_1)
\end{aligned}
\right)+\left(
\begin{aligned}
0~&\\
-x& _2
\end{aligned}
\right)u_{\textbf{I}}(x).
\end{align}
Notice that $F_{\textbf{I}}(\cdot,\cdot)$ is the state-space representation of a 2nd-order ODE. The feedback law $u_{\textbf{I}}:\mathbb{R}^{2n}\rightarrow \mathbb{R}$ is given by
\begin{equation}
\label{s8_1}
u_{\textbf{I}}(x) = \alpha + \frac{\| \nabla f(x_1) \|^2 - \langle \nabla^2 f(x_1) x_2, x_2 \rangle}{\langle \nabla f(x_1), -x_2  \rangle}.
\end{equation}
Intuitively, the control input $u_{\textbf{I}}(x)$ is designed such that the flow map $F_{\textbf{I}}\big(x,u_{\textbf{I}}(x)\big)$ renders a level set $\sigma(t):=\langle \nabla f\big(x_1(t)\big),x_2(t) \rangle +\alpha\big(f\big(x_1(t)\big)-f^*\big)$ invariant, i.e., $\frac{d}{dt}\sigma(t)=0$. 
Next, the candidate flow set $\mathcal{C}_{\textbf{I}} \subset \mathbb{R}^{2n}$ is characterized by an admissible input interval $[\ul{u}_{\textbf{I}}~\ol{u}_{\textbf{I}}]$, i.e.,
\begin{equation}
\label{s8_2}
\mathcal{C}_{\textbf{I}} = \big\{x\in\mathbb{R}^{2n}:~ u_{\textbf{I}}(x)\in [\ul{u}_{\textbf{I}},,\ol{u}_{\textbf{I}}] \big\},
\end{equation} 
where the interval bounds $\ul{u}_{\textbf{I}},\ol{u}_{\textbf{I}}$ represent the range of admissible control values. Notice that the flow set $\mathcal{C}_{\textbf{I}}$ is the domain in which the hybrid system (\ref{p1}) can evolve continuously. Finally, we introduce the jump map $G_{\textbf{I}}:\mathbb{R}^{2n}\rightarrow\mathbb{R}^{2n}$ parameterized by a constant $\beta_{\textbf{I}}$
\begin{align}
\label{s8}
G_{\textbf{I}}(x)=\left(\begin{aligned}
x& _1  \\
-\beta_{\textbf{I}} \nabla & f(x_1)
\end{aligned}\right).
\end{align}
The parameter $\beta_{\textbf{I}}$ ensures that the range space of the jump map $G_{\textbf{I}}$ is a strict subset of $\text{int}(\mathcal{C}_{\textbf{I}})$. By construction, one can inspect that any neighborhood of the optimizer $x_1^*$ has a non-empty intersection with the flow set $\mathcal{C}_{\textbf{I}}$. That is, there always exist paths in the set $\mathcal{C}_{\textbf{I}}$ that allow the continuous evolution of the hybrid system to approach arbitrarily close to the optimizer.
\end{subequations}

We are now in a position to formally present the main results related to the structure~\textbf{I} given in \eqref{sH}. For the sake of completeness, we borrow the first result from \cite{armanICML}.
This theorem provides a framework to set the parameters $\ul{u}_{\textbf{I}}$, $\ol{u}_{\textbf{I}}$, and $\beta_{\textbf{I}}$ in (\ref{s8_2}) and (\ref{s8}) in order to ensure the desired exponential convergence rate $\mathcal{O}(e^{-\alpha t})$.
\begin{Thm}[Continuous-time convergence rate - \textbf{I}]
\label{theo_1b}
Consider a positive scalar $\alpha$ and a smooth function $f: \mathbb{R}^n\rightarrow \mathbb{R}$ satisfying Assumptions~(\ref{p2}) and (\ref{d_1}). Then, the solution trajectory of the hybrid control system (\ref{p1}) with the respective parameters (\ref{sH}) starting from any initial condition $x_1(0)$ satisfies 
\begin{equation}
\label{eqt_8b}
f\big(x_1(t)\big)-f^* \leq e^{-\alpha t} \Big( f\big(x_1(0)\big)-f^* \Big), \quad \forall t \geq0,
\end{equation}
if the scalars $\ul{u}_{\textbf{I}}$, $\ol{u}_{\textbf{I}}$, and $\beta_{\textbf{I}}$ are chosen such that
\begin{subequations}
\label{eqt_1b}
\begin{align}
\ul{u}_{\textbf{I}} & <  \alpha+  \beta_{\textbf{I}}^{-1}-L_f\beta_{\textbf{I}},  \label{eqt_1b1}\\
\label{eqt_1b2}
\ol{u}_{\textbf{I}}  & >   \alpha+  \beta_{\textbf{I}}^{-1}+\ell_f\beta_{\textbf{I}}, \\
\alpha & \leq 2 \mu_f \beta_{\textbf{I}}.  \label{eqt_1b3}
\end{align}
\end{subequations}
\end{Thm}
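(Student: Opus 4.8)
The plan is to make the scalar quantity $\sigma(t) := \langle \nabla f(x_1(t)), x_2(t) \rangle + \alpha\big(f(x_1(t)) - f^*\big)$, already flagged after (\ref{s8_1}), the engine of the estimate. Writing $V(t) := f(x_1(t)) - f^*$, the flow dynamics $\dot x_1 = x_2$ give $\dot V = \langle \nabla f(x_1), x_2 \rangle$, so that $\sigma = \dot V + \alpha V$ throughout any flow interval. My first step is to confirm by direct differentiation that the feedback (\ref{s8_1}) makes $\sigma$ a flow invariant: substituting $\dot x_1 = x_2$ and $\dot x_2 = -\nabla f(x_1) - u_{\textbf{I}}(x)\,x_2$ yields $\frac{d}{dt}\sigma = \langle \nabla^2 f(x_1) x_2, x_2 \rangle - \|\nabla f(x_1)\|^2 + (\alpha - u_{\textbf{I}}(x))\langle \nabla f(x_1), x_2 \rangle$, and plugging in (\ref{s8_1}) cancels this to zero. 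Hence $\sigma$ is constant on each flow interval, equal to its value at the start of that interval.

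Given this invariance, the convergence bound reduces to a sign condition on $\sigma$. On a flow interval where $\sigma \equiv \sigma_0$, the identity $\frac{d}{dt}\big(e^{\alpha t} V\big) = e^{\alpha t}(\dot V + \alpha V) = e^{\alpha t}\sigma_0$ shows that $e^{\alpha t}V(t)$ is non-increasing precisely when $\sigma_0 \leq 0$. Since $V$ depends only on $x_1$ and the jump map (\ref{s8}) leaves $x_1$ unchanged, both $V$ and $e^{\alpha t}V$ are continuous across jumps, so it suffices to establish $\sigma \leq 0$ at the start of every flow interval in order to conclude $e^{\alpha t}V(t) \leq V(0)$, i.e. (\ref{eqt_8b}).

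The next step is the jump analysis, where (\ref{eqt_1b3}) enters. Right after a jump, $x_2^+ = -\beta_{\textbf{I}}\nabla f(x_1)$, so $\sigma^+ = -\beta_{\textbf{I}}\|\nabla f(x_1)\|^2 + \alpha V$. Invoking the PL inequality (\ref{d_1}) as $\|\nabla f(x_1)\|^2 \geq 2\mu_f V$ together with $V \geq 0$ gives $\sigma^+ \leq (\alpha - 2\mu_f\beta_{\textbf{I}})V \leq 0$ exactly under (\ref{eqt_1b3}). Taking the initial velocity to be set by the reset rule (so that $x_2(0) = -\beta_{\textbf{I}}\nabla f(x_1(0))$ and $\sigma(0)\leq 0$), every flow interval then begins with $\sigma \leq 0$, closing the estimate. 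To pin down the role of (\ref{eqt_1b1})–(\ref{eqt_1b2}), I would evaluate (\ref{s8_1}) at the post-jump state: the denominator becomes $\beta_{\textbf{I}}\|\nabla f(x_1)\|^2$ and a short computation gives $u_{\textbf{I}} = \alpha + \beta_{\textbf{I}}^{-1} - \beta_{\textbf{I}}\,\frac{\langle \nabla^2 f(x_1)\nabla f(x_1), \nabla f(x_1)\rangle}{\|\nabla f(x_1)\|^2}$; the Hessian bound (\ref{p2}) sandwiches this in $[\alpha + \beta_{\textbf{I}}^{-1} - L_f\beta_{\textbf{I}},\, \alpha + \beta_{\textbf{I}}^{-1} + \ell_f\beta_{\textbf{I}}]$, which (\ref{eqt_1b1})–(\ref{eqt_1b2}) place strictly inside $(\ul{u}_{\textbf{I}}, \ol{u}_{\textbf{I}})$. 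This confirms that each reset lands in $\text{int}(\mathcal{C}_{\textbf{I}})$, so that a genuine flow interval follows every jump and the invariance argument can be iterated.

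The main obstacle I anticipate is the bookkeeping of the hybrid trajectory rather than any single computation: one must argue that the concatenation of flow and jump phases yields a well-defined solution on $[0,\infty)$ along which $e^{\alpha t}V(t)$ is globally non-increasing. The delicate points are the continuity of $V$ at jump times — which is what rescues the integration across the discontinuity in $x_2$ — and the non-degeneracy of the denominator $\langle \nabla f(x_1), -x_2 \rangle$ in (\ref{s8_1}) during flow, since a vanishing denominator with nonzero numerator drives $u_{\textbf{I}}$ out of $[\ul{u}_{\textbf{I}},\ol{u}_{\textbf{I}}]$ and thus outside the flow set. It is precisely the flow-set construction (\ref{s8_2}), reinforced by the accompanying Zeno-freeness guarantee established separately, that keeps the feedback well-defined between consecutive resets and legitimizes the piecewise integration.
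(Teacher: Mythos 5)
Your proposal is correct and follows essentially the same route as the paper's argument (given for structure \textbf{II} in Section~\ref{sec:proofs} and mirroring the proof of Theorem~\ref{theo_1b} in \cite{armanICML}): the feedback is designed to make $\sigma(t)=\langle \nabla f(x_1),x_2\rangle+\alpha\big(f(x_1)-f^*\big)$ invariant along the flow, the PL inequality together with \eqref{eqt_1b3} forces $\sigma\leq 0$ at every post-jump state, and the Hessian bounds \eqref{p2} place the post-jump control value strictly inside $(\ul{u}_{\textbf{I}},\ol{u}_{\textbf{I}})$, which is exactly where \eqref{eqt_1b1}--\eqref{eqt_1b2} come from. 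Your direct integration of $\frac{d}{dt}\big(e^{\alpha t}V\big)=e^{\alpha t}\sigma$ is just the explicit form of the Gronwall step the paper uses on the set $\mathcal{E}_\alpha=\{\sigma<0\}$, so the two arguments coincide in substance.
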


The next result establishes a key feature of the solution trajectories generated by the dynamics (\ref{p1}) with the respective parameters (\ref{sH}), that the solution trajectories are indeed \emph{Zeno}-free.

\begin{Thm}[Zeno-free hybrid trajectories - \textbf{I}]
\label{theo_zeno}
Consider a smooth function $f: \mathbb{R}^n\rightarrow \mathbb{R}$ satisfying Assumption~\ref{def_10}, and the corresponding hybrid control system~\eqref{p1} with the respective parameters \eqref{sH} satisfying \eqref{eqt_1b}. Given the initial condition $\Big(x_1(0),-\beta_{\textbf{I}} \nabla f\big(x_1(0)\big) \Big)$ the time between two consecutive jumps of the solution trajectory, denoted by $\tau_{\textbf{I}}$, satisfies for any scalar $r>1$
\begin{align}
\label{z_t2_03}
\tau_{\textbf{I}} \ge \log \left(\min{\bigg\{\frac{a_1}{a_2+ a_3 \big\|\nabla f\big(x_{1}(0)\big)\big\| } +1, r \bigg\}^{1/\delta}}\right), 
\end{align}
where the involved constants are defined as 
\begin{subequations}
	\begin{align}
	C & :=\frac{(\ol{u}_{\textbf{I}} - \alpha) + \sqrt{(\ol{u}_{\textbf{I}} - \alpha)^2 + 4L_f} }{2}, \label{z_t2_2}\\
	\delta & := C + \max \{\ol{u}_{\textbf{I}}, -\ul{u}_{\textbf{I}} \},  \label{z_t2_3} \\
	\mathcal{L}_f & := \max\{\ell_f, L_f \},  \label{z_t2_30} \\
	a_1 &:= \min \{ \ol{u}_{\textbf{I}}-(\alpha + \beta_{\textbf{I}}^{-1}+\ell_f \beta_{\textbf{I}}), (\alpha+\beta_{\textbf{I}}^{-1}-L_f \beta_{\textbf{I}})-\ul{u}_{\textbf{I}}  \}, \label{z_t2_00} \\
	a_2 &:= r L_f \delta^{-1}   (r \beta_{\textbf{I}} C  + 1 ) + \beta_{\textbf{I}}^{-1} + (r^2+r+1) \beta_{\textbf{I}} \mathcal{L}_f, \label{z_t2_01}\\
	a_3  &:= r^3 \beta_{\textbf{I}}^2 H_f \delta^{-1}. \label{z_t2_02}
	\end{align}
\end{subequations}
Consequently, the solution trajectories are Zeno-free.
\end{Thm}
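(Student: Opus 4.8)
The plan is to reduce the claim to a single flow interval and then lower bound the time the feedback value $u_{\textbf{I}}$ needs to travel from its post-jump value to an endpoint of the admissible interval $[\ul{u}_{\textbf{I}},\ol{u}_{\textbf{I}}]$, since a jump is triggered exactly when $x$ leaves $\mathcal{C}_{\textbf{I}}$, i.e. when $u_{\textbf{I}}(x)$ reaches $\ul{u}_{\textbf{I}}$ or $\ol{u}_{\textbf{I}}$. By time-invariance of $\mathcal{H}$ and the fact that every jump resets the velocity to $x_2=-\beta_{\textbf{I}}\nabla f(x_1)$, it suffices to analyze one flow initialized at a post-jump state $\big(x_1(0),-\beta_{\textbf{I}}\nabla f(x_1(0))\big)$. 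First I would evaluate $u_{\textbf{I}}$ right after the jump: substituting $x_2=-\beta_{\textbf{I}}\nabla f(x_1)$ into \eqref{s8_1} gives $u_{\textbf{I}}=\alpha+\beta_{\textbf{I}}^{-1}-\beta_{\textbf{I}}\,\langle\nabla^2 f(x_1)\nabla f(x_1),\nabla f(x_1)\rangle/\|\nabla f(x_1)\|^2$, and the Hessian bounds \eqref{p2} confine this to $[\alpha+\beta_{\textbf{I}}^{-1}-L_f\beta_{\textbf{I}},\,\alpha+\beta_{\textbf{I}}^{-1}+\ell_f\beta_{\textbf{I}}]$. Combined with \eqref{eqt_1b1}--\eqref{eqt_1b2}, this places the post-jump value strictly inside the flow interval at distance at least $a_1$ (as in \eqref{z_t2_00}) from either endpoint, so $u_{\textbf{I}}$ must change by at least $a_1$ before the next jump.

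Next I would exploit the defining invariance of the controller. Writing $N:=\|\nabla f(x_1)\|^2-\langle\nabla^2 f(x_1)x_2,x_2\rangle$ and $D:=\langle\nabla f(x_1),-x_2\rangle$, so that $u_{\textbf{I}}=\alpha+N/D$ and $N=(u_{\textbf{I}}-\alpha)D$, a direct computation along the flow shows that the level function $\sigma=\langle\nabla f(x_1),x_2\rangle+\alpha\big(f(x_1)-f^*\big)$ satisfies $\dot\sigma=0$, whence $\dot D=-\alpha D$ and $D(t)=\beta_{\textbf{I}}\|\nabla f(x_1(0))\|^2 e^{-\alpha t}$. This is the clean handle I rely on: $D$ stays strictly positive along the whole flow, so $u_{\textbf{I}}$ never blows up (the exit from $\mathcal{C}_{\textbf{I}}$ genuinely happens through an endpoint), and moreover $\dot u_{\textbf{I}}=(\dot N+\alpha N)/D$. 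To control the state magnitudes I would combine $N=(u_{\textbf{I}}-\alpha)D$ with the upper Hessian bound and $\ol{u}_{\textbf{I}}>\alpha$ to derive $\|\nabla f(x_1)\|^2\le L_f\|x_2\|^2+(\ol{u}_{\textbf{I}}-\alpha)\|\nabla f(x_1)\|\,\|x_2\|$ on the flow set; read as a quadratic inequality in the ratio $\|\nabla f(x_1)\|/\|x_2\|$, this yields exactly $\|\nabla f(x_1)\|\le C\|x_2\|$ with $C$ as in \eqref{z_t2_2}. Feeding this into $\tfrac{d}{dt}\|x_2\|\le\|\dot x_2\|\le\|\nabla f(x_1)\|+|u_{\textbf{I}}|\,\|x_2\|\le\big(C+\max\{\ol{u}_{\textbf{I}},-\ul{u}_{\textbf{I}}\}\big)\|x_2\|=\delta\|x_2\|$ gives the exponential state bounds $\|x_2(t)\|\le\beta_{\textbf{I}}\|\nabla f(x_1(0))\|\,e^{\delta t}$ and $\|\nabla f(x_1(t))\|\le C\|x_2(t)\|$ at rate $\delta$.

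With these ingredients I would bound $\dot u_{\textbf{I}}=(\dot N+\alpha N)/D$. Differentiating $N$ along the flow and substituting $\dot x_2=-\nabla f(x_1)-u_{\textbf{I}}x_2$ produces terms in $\langle\nabla f,\nabla^2 f\,x_2\rangle$, $u_{\textbf{I}}\langle\nabla^2 f\,x_2,x_2\rangle$, and $\langle(\tfrac{d}{dt}\nabla^2 f)x_2,x_2\rangle$; the last is controlled by the Lipschitz-Hessian bound \eqref{z6}, which yields $\|\tfrac{d}{dt}\nabla^2 f(x_1(t))\|\le H_f\|x_2\|$ almost everywhere even though $f$ is only twice differentiable. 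Estimating each term with $\|\nabla^2 f\|\le\mathcal{L}_f$, the ratio bound $\|\nabla f\|\le C\|x_2\|$, $|u_{\textbf{I}}|\le\max\{\ol{u}_{\textbf{I}},-\ul{u}_{\textbf{I}}\}$, and $D(t)=\beta_{\textbf{I}}\|\nabla f(x_1(0))\|^2 e^{-\alpha t}$, and restricting to the window where $e^{\delta t}\le r$ (so that every surplus growth factor is replaced by a power of $r$, which is where $r,r^2,r^3$ enter $a_2,a_3$), I expect an estimate of the form $|\dot u_{\textbf{I}}(t)|\le\delta\,(a_2+a_3\|\nabla f(x_1(0))\|)\,e^{\delta t}$. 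Integrating and forcing the accumulated change to reach $a_1$ gives $e^{\delta\tau_{\textbf{I}}}-1\ge a_1/(a_2+a_3\|\nabla f(x_1(0))\|)$, i.e. $\tau_{\textbf{I}}\ge\tfrac1\delta\log\big(1+a_1/(a_2+a_3\|\nabla f(x_1(0))\|)\big)$; since this derivation is valid only while $e^{\delta t}\le r$, the guaranteed no-jump time is the minimum of this value and $\tfrac1\delta\log r$, which is precisely \eqref{z_t2_03}. As $\tau_{\textbf{I}}>0$ uniformly over post-jump states, only finitely many jumps can occur in any finite interval, establishing Zeno-freeness.

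The main obstacle I anticipate lies in the term $\langle(\tfrac{d}{dt}\nabla^2 f)x_2,x_2\rangle$ appearing in $\dot N$: because $f$ need not be three-times differentiable, this must be justified through \eqref{z6} rather than a third derivative, and one must ensure the resulting bound holds for almost every $t$ and integrates correctly. The second delicate point is the self-referential bookkeeping of $r$: the bound on $|\dot u_{\textbf{I}}|$ presupposes $e^{\delta t}\le r$, so the final estimate must be capped at $\tfrac1\delta\log r$ to stay internally consistent, which is the origin of the $\min\{\cdot,r\}$ and of the specific powers of $r$ and factors of $\delta^{-1}$ collected in $a_2$ and $a_3$; carefully tracking these constants through the termwise estimates is the tedious heart of the argument.
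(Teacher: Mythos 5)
Your proposal is correct in substance and follows the same architecture as the paper's proof: reduction to a single flow interval via time-invariance, the post-jump evaluation of $u_{\textbf{I}}$ showing it sits at distance at least $a_1$ from the endpoints (the paper's Step 5), the exact decay law $\langle \nabla f(x_1),-x_2\rangle=\beta_{\textbf{I}}\|\nabla f(x_1(0))\|^2e^{-\alpha t}$ (Step 1), the ratio bound $\|\nabla f(x_1)\|\le C\|x_2\|$ and the rate-$\delta$ growth bounds (Lemmas~\ref{lem_z1} and \ref{lem_z2}), and the capped window $e^{\delta t}\le r$ producing the $\min\{\cdot,r\}$ in \eqref{z_t2_03}. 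The one genuine methodological difference is that you estimate the \emph{derivative} $\dot u_{\textbf{I}}=(\dot N+\alpha N)/D$ and integrate, whereas the paper bounds the \emph{finite difference} directly: note that your identity integrates to $u_{\textbf{I}}(t)-u_{\textbf{I}}(0)=\big(e^{\alpha t}N(t)-N(0)\big)\big/\big(\beta_{\textbf{I}}\|\nabla f(x_1(0))\|^2\big)$, which is exactly the paper's Step-4 decomposition, so the two routes are estimates of the same quantity. The difference route has two advantages you should be aware of. First, it sidesteps the differentiability issue you flag: the paper never differentiates $\nabla^2 f(x_1(t))$ in $t$, but only uses \eqref{z6} as a difference bound $\|\nabla^2f(x_1)-\nabla^2f(x_1(0))\|\le H_f\|x_1-x_1(0)\|$, so no Rademacher/absolute-continuity argument is needed (your a.e.-derivative fix is workable but adds measure-theoretic overhead). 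Second, and more importantly, the theorem asserts the bound with the \emph{specific} constants \eqref{z_t2_01}--\eqref{z_t2_02}; your termwise derivative estimates reproduce $a_3$ exactly but produce a different coefficient in place of $a_2$ (e.g.\ your $\alpha N/D$ term generates an $\alpha C^2$-type contribution absent from $a_2$, while the paper's $\beta_{\textbf{I}}^{-1}$ term arises differently), and there is no guarantee your constant is smaller, so as written your argument proves a statement of the same form but not verbatim \eqref{z_t2_03}. Finally, your last sentence asserts uniform positivity of $\tau_{\textbf{I}}$ over post-jump states without justification: the bound degrades as $\|\nabla f(x_1(0))\|$ grows, so Zeno-freeness requires $\sup_{t\ge0}\|\nabla f(x_1(t))\|<\infty$ along the whole trajectory, which the paper obtains from the convergence guarantee of Theorem~\ref{theo_1b}; this one-line appeal should be made explicit.
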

\begin{Rem}[Non-uniform inter-jumps - \textbf{I}]
\label{rem_41}
Notice that Theorem~\ref{theo_zeno} suggests a lower-bound for the inter-jump interval $\tau_{\textbf{I}}$ that depends on $\| \nabla f\big(x_1\big)\|$. In light of the fact that the solution trajectories converge to the optimal solutions, and as such $\nabla f\big(x_1\big)$ tends to zero, one can expect that the frequency at which the jumps occur reduces as the hybrid control system evolves in time.
\end{Rem}

\subsection{Structure \textbf{II}: state-dependent potential coefficient}
\label{sec:par_II}

\begin{subequations}
\label{sH_step}
In this subsection, we first provide the structure~\textbf{II} for the hybrid control system~\eqref{p1}. We skip the the details of differences with the structure~\textbf{I} and differ it to Subection~\ref{sec:par_I} and Section~\ref{sec:proofs}. Consider the flow map $F_{\textbf{II}}:\mathbb{R}^{2n}\times \mathbb{R}\rightarrow\mathbb{R}^{2n}$ given by
\begin{align}
\label{step_01}
F_{\textbf{II}}\big(x,u_{\textbf{II}}(x)\big)=
\left(
\begin{aligned}
~x&_2\\
-&x_2
\end{aligned}
\right)+\left(
\begin{aligned}
0~&\\
-\nabla f &(x_1)
\end{aligned}
\right)u_{\textbf{II}}(x),
\end{align}
and the feedback law $u_{\textbf{II}}:\mathbb{R}^{2n}\rightarrow \mathbb{R}$ given by
\begin{align}
\label{step_02}
u_{\textbf{II}} (x) = \frac{ \langle \nabla^2 f(x_1) x_2, x_2 \rangle  + ( 1- \alpha) \langle \nabla f(x_1), - x_2 \rangle  }{ \| \nabla f(x_1) \|^2 }.
\end{align}
Notice that here the input $u_{\textbf{II}} (x)$ is derived along the same lines as in structure~\textbf{I}. 
The feedback input $u_{\textbf{II}} (x)$ is synthesized such that the level set $\sigma(t):=\langle \nabla f\big(x_1(t)\big),x_2(t) \rangle +\alpha\big(f\big(x_1(t)\big)-f^*\big)$ remains constant as the dynamics $x$ evolve based on the flow map $F_{\textbf{II}}\big(x,u_{\textbf{II}}(x)\big)$. 
The candidate flow set $\mathcal{C}_{\textbf{II}} \subset \mathbb{R}^{2n}$ is parameterized by an admissible interval $[\ul{u}_{\textbf{II}}~\ol{u}_{\textbf{II}}]$ as follows:
\begin{align}
\label{step_03}
\mathcal{C}_{\textbf{II}} = \left\lbrace x\in \mathbb{R}^{2n}:~ u_{\textbf{II}}(x) \in [\ul{u}_{\textbf{II}},\ol{u}_{\textbf{II}}] \right\rbrace.
\end{align}
Parameterized in a constant $\beta_{\textbf{II}}$, the jump map $G_{\textbf{II}}:\mathbb{R}^{2n}\rightarrow\mathbb{R}^{2n}$ is given by
\begin{align}
\label{step_04}
G_{\textbf{II}}(x)= \left(
\begin{aligned}
 x&_1 \\
-\beta_{\textbf{II}} \nabla & f(x_1)
\end{aligned}
\right).
\end{align}
\end{subequations}
\begin{Thm}[Continuous-time convergence rate - \textbf{II}]
\label{theo_step_conv}
Consider a positive scalar $\alpha$ and a smooth function $f: \mathbb{R}^n\rightarrow \mathbb{R}$ satisfying Assumptions~(\ref{p2}) and (\ref{d_1}). Then, the solution trajectory of the hybrid control system~(\ref{p1}) with the respective parameters~\eqref{sH_step} starting from any initial condition $x_1(0)$ satisfies the inequality~\eqref{eqt_8b} if the scalars $\ul{u}_{\textbf{II}}$, $\ol{u}_{\textbf{II}}$, and $\beta_{\textbf{II}}$ are chosen such that
\begin{subequations}
\label{step_05}
\begin{align}
\ul{u}_{\textbf{II}} & < -\ell_f \beta_{\textbf{II}}^2 + (1 - \alpha) \beta_{\textbf{II}}, \label{step_05_2}  \\
\ol{u}_{\textbf{II}} & > L_f \beta_{\textbf{II}}^2 + (1 - \alpha) \beta_{\textbf{II}}, \label{step_05_1} \\
\alpha & \leq 2 \mu_f \beta_{\textbf{II}}. \label{step_05_3} 
\end{align}
\end{subequations}
\end{Thm}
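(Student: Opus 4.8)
\emph{Proof plan.} The plan is to reproduce, \emph{mutatis mutandis}, the mechanism behind Theorem~\ref{theo_1b}, exploiting the fact that structure~\textbf{II} is engineered to keep the \emph{same} quantity $\sigma$ invariant along the flow. First I would introduce the certificate
\[
\sigma(t) := \langle \nabla f(x_1(t)), x_2(t) \rangle + \alpha\big(f(x_1(t)) - f^*\big),
\]
and differentiate it along $F_{\textbf{II}}$. Substituting $\dot{x}_1 = x_2$ and $\dot{x}_2 = -x_2 - u_{\textbf{II}}(x)\nabla f(x_1)$ yields
\[
\frac{d}{dt}\sigma = \langle \nabla^2 f(x_1) x_2, x_2 \rangle - \langle \nabla f(x_1), x_2 \rangle - u_{\textbf{II}}(x)\,\|\nabla f(x_1)\|^2 + \alpha \langle \nabla f(x_1), x_2 \rangle,
\]
and plugging in the definition \eqref{step_02} of $u_{\textbf{II}}$ makes the right-hand side identically zero. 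Thus $\sigma$ is constant on every flow interval; this is precisely the design principle announced after \eqref{step_02}, and it is the one genuinely new computation relative to structure~\textbf{I}.

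Next I would convert the sign of $\sigma$ into the exponential bound. Writing $V(t) := f(x_1(t)) - f^*$ and noting $\dot{V} = \langle \nabla f(x_1), x_2 \rangle = \sigma - \alpha V$, I obtain $\frac{d}{dt}\big(e^{\alpha t}V(t)\big) = e^{\alpha t}\sigma(t)$. Since the jump map $G_{\textbf{II}}$ leaves $x_1$ (hence $V$) unchanged, $e^{\alpha t}V(t)$ is continuous across jumps, so it suffices to establish $\sigma(t) \le 0$ throughout in order to conclude that $e^{\alpha t}V(t)$ is non-increasing on the entire hybrid time domain, which is exactly \eqref{eqt_8b}.

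It then remains to show $\sigma \le 0$ for all $t$. By the invariance just proved, it is enough to verify this at the initial time and immediately after each jump. At all these instants the state has the form $x_2 = -\beta_{\textbf{II}}\nabla f(x_1)$, for which $\sigma = -\beta_{\textbf{II}}\|\nabla f(x_1)\|^2 + \alpha V$. The Polyak--{\L}ojasiewicz inequality \eqref{d_1} gives $\|\nabla f(x_1)\|^2 \ge 2\mu_f V$, so condition \eqref{step_05_3}, namely $\alpha \le 2\mu_f\beta_{\textbf{II}}$, forces $\alpha V \le 2\mu_f\beta_{\textbf{II}} V \le \beta_{\textbf{II}}\|\nabla f(x_1)\|^2$, i.e.\ $\sigma \le 0$, as needed.

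Finally, for the hybrid trajectory to be well-defined I would use conditions \eqref{step_05_2}--\eqref{step_05_1} to guarantee that each post-jump state lies in $\text{int}(\mathcal{C}_{\textbf{II}})$, so the system genuinely flows rather than re-jumping instantaneously. Evaluating $u_{\textbf{II}}$ at $x_2 = -\beta_{\textbf{II}}\nabla f(x_1)$ gives $\beta_{\textbf{II}}^2\,\langle \nabla^2 f(x_1)\nabla f(x_1),\nabla f(x_1)\rangle/\|\nabla f(x_1)\|^2 + (1-\alpha)\beta_{\textbf{II}}$; the Rayleigh quotient is sandwiched in $[-\ell_f, L_f]$ by the bounded-Hessian assumption \eqref{p2}, placing this value strictly inside $(\ul{u}_{\textbf{II}}, \ol{u}_{\textbf{II}})$ precisely under \eqref{step_05_2}--\eqref{step_05_1}. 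The main obstacle is not any single calculation but the bookkeeping of stitching the flow invariance together with the jump resets across the hybrid time domain: \eqref{step_05_2}--\eqref{step_05_1} are what make this stitching legitimate by preventing re-jumps, while \eqref{step_05_3} is what secures the sign of $\sigma$ at every reset.
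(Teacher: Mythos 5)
Your proposal is correct and follows essentially the same route as the paper's proof: both hinge on showing $\dot\sigma=0$ along the flow by the very design of $u_{\textbf{II}}$ (the paper phrases this as invariance of the set $\mathcal{E}_\alpha=\{\sigma<0\}$), both use the PL inequality together with \eqref{step_05_3} to secure the sign of $\sigma$ at the reset states $x_2=-\beta_{\textbf{II}}\nabla f(x_1)$, and both verify via the Hessian bounds \eqref{p2} that the post-jump control value lands strictly inside $(\ul{u}_{\textbf{II}},\ol{u}_{\textbf{II}})$ under \eqref{step_05_2}--\eqref{step_05_1}. Your integrating-factor formulation $\frac{d}{dt}\big(e^{\alpha t}V\big)=e^{\alpha t}\sigma$ and the explicit remark that $V$ is continuous across jumps are just a cleaner packaging of the paper's Gronwall step, not a different argument.
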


\begin{Thm}[Zeno-free hybrid trajectories - \textbf{II}]
\label{theo_step_zeno}
Consider a smooth function $f: \mathbb{R}^n\rightarrow \mathbb{R}$ satisfying Assumptions~\eqref{p2} and \eqref{d_1}, and the hybrid control system~\eqref{p1} with the respective parameters \eqref{sH_step} satisfying \eqref{step_05}. Given the initial condition $\Big(x_1(0),-\beta_{\textbf{II}} \nabla f\big(x_1(0)\big) \Big)$ the time between two consecutive jumps of the solution trajectory, denoted by $\tau_{\textbf{II}}$, satisfies for any scalar $r \in (0,1)$
\begin{align}
\label{step_3}
\tau_{\textbf{II}} \geq \min\left\{ r\omega^{-1}, \delta ( b_1+b_2)^{-1}  \right\}.
\end{align}
where the involved scalars are defined as
\begin{align*}
\delta &:=\min\big\{\ol{u}_{\textbf{II}}-(L_f\beta_{\textbf{II}}^2+(1-\alpha)\beta_{\textbf{II}}), (-\ell_f\beta_{\textbf{II}}^2+(1-\alpha)\beta_{\textbf{II}})-\ul{u}_{\textbf{II}} \big\}, \\
U &:=\max \{\ol{u}_{\textbf{II}}, -\ul{u}_{\textbf{II}}\},\\
\mathcal{L}_f &:=\max \{ \ell_f, L_f \},\\
\omega &:=\mathcal{L}_f (\beta_{\textbf{II}}^2+\beta_{\textbf{II}} U)^{\frac{1}{2}},\\
b1 & := \frac{2 \mathcal{L}_f \beta_{\textbf{II}} \big(  U +  \omega  (\beta_{\textbf{II}}  +U)   \big) }{(1-r)^3}, \\
b_2& := |\alpha - 1| \frac{ 2 \omega  \beta_{\textbf{II}} }{(1-r)^3} + |\alpha - 1| \alpha \beta_{\textbf{II}} (1+r).
\end{align*}
Thus, the solution trajectories are Zeno-free.
\end{Thm}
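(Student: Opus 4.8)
The plan is to lower-bound the time until the flow leaves $\mathcal{C}_{\textbf{II}}$ after an arbitrary jump; since every jump lands in the form $\big(x_1,-\beta_{\textbf{II}}\nabla f(x_1)\big)$, a single positive lower bound of this kind recurs after each jump and rules out Zeno behaviour. Writing $g(t):=\nabla f\big(x_1(t)\big)$ and $v(t):=x_2(t)$, a jump places the state at $v(0)=-\beta_{\textbf{II}}g(0)$, so $\|v(0)\|=\beta_{\textbf{II}}\|g(0)\|$, while the flow obeys $\dot g=\nabla^2 f(x_1)\,v$ and $\dot v=-v-u_{\textbf{II}}(x)\,g$. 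First I would show that the post-jump value of $u_{\textbf{II}}$ sits strictly inside the admissible interval with margin $\delta$: substituting $v(0)=-\beta_{\textbf{II}}g(0)$ into \eqref{step_02} and using the two-sided Hessian bound \eqref{p2} gives $u_{\textbf{II}}(0)\in\big[-\ell_f\beta_{\textbf{II}}^2+(1-\alpha)\beta_{\textbf{II}},\,L_f\beta_{\textbf{II}}^2+(1-\alpha)\beta_{\textbf{II}}\big]$, which by \eqref{step_05_2}--\eqref{step_05_1} lies in $(\ul{u}_{\textbf{II}},\ol{u}_{\textbf{II}})$ at distance at least $\delta$ from each endpoint. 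A jump can therefore occur only once $u_{\textbf{II}}$ has moved by $\delta$, so it suffices to upper-bound how fast $u_{\textbf{II}}$ can move along the flow.

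The decisive point --- and the reason \eqref{z6} is \emph{not} needed here, in contrast to Theorem~\ref{theo_zeno} --- is that I would never differentiate the Hessian term of $u_{\textbf{II}}$, which would produce a third-order derivative of $f$. Instead I enclose $u_{\textbf{II}}$ between two curves depending only on quantities controllable through \eqref{p2} alone. Splitting \eqref{step_02} as $u_{\textbf{II}}=R+(\alpha-1)S$ with $R:=\langle\nabla^2 f(x_1)v,v\rangle/\|g\|^2$ and $S:=\langle g,v\rangle/\|g\|^2$, the bound \eqref{p2} yields $R(t)\in\big[-\ell_f\rho(t)^2,\,L_f\rho(t)^2\big]$, where $\rho(t)^2:=\|v(t)\|^2/\|g(t)\|^2$, so the endpoints of the enclosing interval for $u_{\textbf{II}}(t)$ involve only $\rho(t)^2$ and $S(t)$. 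Moreover, since $u_{\textbf{II}}$ is designed to keep $\sigma(t)=\langle g,v\rangle+\alpha\big(f(x_1)-f^*\big)$ invariant, $\langle g,v\rangle$ solves $\tfrac{d}{dt}\langle g,v\rangle=-\alpha\langle g,v\rangle$, giving the explicit form $\langle g(t),v(t)\rangle=-\beta_{\textbf{II}}e^{-\alpha t}\|g(0)\|^2$ and hence $S(t)=-\beta_{\textbf{II}}e^{-\alpha t}\|g(0)\|^2/\|g(t)\|^2$. At $t=0$ the enclosing interval coincides with the guaranteed post-jump range above, so the whole task reduces to bounding how far its endpoints drift.

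It then remains to control $\|g(t)\|$ and $\|v(t)\|$ over a short window. From \eqref{p2} and $|u_{\textbf{II}}|\le U$ on the flow set I would derive $\big|\tfrac{d}{dt}\|g\|\big|\le\mathcal{L}_f\|v\|$ and $\big|\tfrac{d}{dt}\|v\|\big|\le\|v\|+U\|g\|$, and integrate them by a Gr\"onwall argument from $\|v(0)\|=\beta_{\textbf{II}}\|g(0)\|$. For $t\le r\omega^{-1}$ this keeps $\|g(t)\|$ bounded away from zero, so $u_{\textbf{II}}$, $R$ and $S$ stay well defined, and confines both $\|g(0)\|/\|g(t)\|$ and $\rho(t)^2$ to factors controlled by $r\in(0,1)$; this is exactly where $\omega$ and the $(1-r)$ denominators originate. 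Feeding these estimates into the explicit forms of $\rho(t)^2$ and $S(t)$ bounds the drift of the enclosing interval as $\mathcal{L}_f|\rho(t)^2-\rho(0)^2|\le b_1 t$ and $|\alpha-1|\,|S(t)-S(0)|\le b_2 t$, so each endpoint moves by at most $(b_1+b_2)t$. As long as $t\le r\omega^{-1}$ and $(b_1+b_2)t<\delta$, the enclosing interval --- and therefore $u_{\textbf{II}}(t)$ --- stays inside $(\ul{u}_{\textbf{II}},\ol{u}_{\textbf{II}})$ and no jump is possible, giving $\tau_{\textbf{II}}\ge\min\{r\omega^{-1},\delta(b_1+b_2)^{-1}\}$.

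The main obstacle is the coupled norm control: because $\|g\|$ and $\|v\|$ feed into each other's growth and neither rate is sign-definite, I expect the delicate part to be choosing the window $r\omega^{-1}$ so that $\|g(t)\|$ provably cannot vanish --- thereby keeping the denominator $\|g\|^2$ in $u_{\textbf{II}}$, $R$ and $S$ under control --- while still extracting clean multiplicative $(1-r)$ factors. The accompanying endpoint-drift estimates that produce $b_1$ and $b_2$ are then routine but bookkeeping-intensive. The conceptual novelty, by contrast, is the enclosure of $u_{\textbf{II}}$ by curves in $\rho^2$ and the explicit cross-term $S$, which sidesteps any Lipschitz-Hessian requirement and explains why Theorem~\ref{theo_step_zeno} dispenses with \eqref{z6}.
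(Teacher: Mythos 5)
Your overall architecture coincides with the paper's: the post-jump margin $\delta$, the enclosure of $u_{\textbf{II}}$ between two curves obtained from the Hessian bound \eqref{p2} together with the exact exponential cross term (which is indeed why \eqref{z6} is not needed), the window $t\le r\omega^{-1}$ on which $\|\nabla f(x_1)\|$ stays away from zero, and the linear drift bound $(b_1+b_2)t$ are precisely the steps of the paper's proof and of its Lemma~\ref{lem_step1}. However, there is a genuine gap in your norm-control step, and it is exactly the step where $\omega$, $b_1$, $b_2$ are produced. You propose to control $\|v\|$ and $\|g\|$ via the coupled differential inequalities $\big|\tfrac{d}{dt}\|g\|\big|\le\mathcal{L}_f\|v\|$ and $\big|\tfrac{d}{dt}\|v\|\big|\le\|v\|+U\|g\|$ plus Gr\"onwall. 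First, the second inequality discards the damping sign (the correct one-sided form is $\tfrac{d}{dt}\|v\|\le-\|v\|+U\|g\|$), so it alone forces exponential growth. Second, even with the damping retained the system stays coupled: its comparison solutions grow like $e^{\lambda_+t}$ with $\lambda_+=\tfrac{1}{2}\big({-1+\sqrt{1+4U\mathcal{L}_f}}\,\big)>0$, and over the window $[0,r\omega^{-1}]$ this contributes a factor $e^{\lambda_+r/\omega}$ which is unbounded in the problem data (for instance, when $U\gg\beta_{\textbf{II}}$ one has $\lambda_+r/\omega\approx r\sqrt{U/\beta_{\textbf{II}}}$). A bootstrap (assume $\|v(s)\|\le K\|g(0)\|$, deduce $\|g(s)\|\le(1+\mathcal{L}_fKs)\|g(0)\|$, re-insert) closes only if $K\big(1-U\mathcal{L}_f\,r\omega^{-1}\big)\ge\beta_{\textbf{II}}+U$, which is infeasible whenever $rU>(\beta_{\textbf{II}}^2+\beta_{\textbf{II}}U)^{1/2}$. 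Consequently your estimates cannot deliver the uniform velocity bound $\|v(t)\|\le(\beta_{\textbf{II}}^2+\beta_{\textbf{II}}U)^{1/2}\|g(0)\|$ on the whole window, and the asserted drift bounds $\mathcal{L}_f|\rho(t)^2-\rho(0)^2|\le b_1t$ and $|\alpha-1|\,|S(t)-S(0)|\le b_2t$ do not follow; you would still get some positive lower bound on $\tau_{\textbf{II}}$ (hence Zeno-freeness qualitatively), but not the stated bound \eqref{step_3} with these constants.

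The missing idea is to use the $\sigma$-invariance identity \eqref{step_1_0} not only in the cross term $S(t)$ but \emph{inside the velocity equation itself}. Since $\tfrac{d}{dt}\|x_2\|^2=-2\|x_2\|^2+2u_{\textbf{II}}\langle\nabla f(x_1),-x_2\rangle$ and $\langle\nabla f(x_1),-x_2\rangle=\beta_{\textbf{II}}e^{-\alpha t}\|\nabla f(x_{1,0})\|^2$ exactly, one obtains the \emph{decoupled} inequality $\tfrac{d}{dt}\|x_2\|^2\le-2\|x_2\|^2+2U\beta_{\textbf{II}}\|\nabla f(x_{1,0})\|^2$, whose Gr\"onwall bound $\|x_2(t)\|^2\le\big(\beta_{\textbf{II}}^2e^{-2t}+\beta_{\textbf{II}}U(1-e^{-2t})\big)\|\nabla f(x_{1,0})\|^2=D(t)^2\|\nabla f(x_{1,0})\|^2$ (this is \eqref{step_2_1}) is uniform in $t$ and entirely independent of $\|\nabla f(x_1(t))\|$, so no bootstrap is needed. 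Only after this does $\big|\tfrac{d}{dt}\|\nabla f(x_1)\|\big|\le\mathcal{L}_f\|x_2\|$ yield $(1-\omega t)\|g(0)\|\le\|g(t)\|\le(1+\omega t)\|g(0)\|$, and differentiating the enclosing curves then produces exactly $b_1$ and $b_2$ with their $(1-r)^{-3}$ factors. This substitution is the one technical move your sketch is missing; everything else in your plan matches the paper.
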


\begin{Rem}[Uniform inter-jumps - \textbf{II}]
\label{rem_41}
Notice that unlike Theorem~\ref{theo_zeno}, the derived lower-bound for the inter-jump interval $\tau_{\textbf{II}}$ is uniform in the sense that the bound is independent of $\| \nabla f\big(x_1\big)\|$. Furthermore, the regularity requirement on the function $f$ is weaker than the one used in Theorem~\ref{theo_zeno}, i.e., the function $f$ is not required to satisfy the Assumption~\eqref{z6}.  
\end{Rem}

Notice that the main differences between the structures~\eqref{sH}, \eqref{sH_step} lie in the flow maps and the feedback laws. On the other hand, these structures share the key feature of enabling an $\alpha$-exponential convergence rate for the hybrid system \eqref{p1} through their corresponding control inputs. The reason explaining the aforementioned points is deferred until later in Section~\ref{sec:proofs}.

\subsection{Further Discussions}
\label{sec:par_I}

In what follows, we collect several remarks regarding the common features of the proposed structures. Then, we apply the \emph{forward-Euler} method of time-discretization to these structures of the hybrid control system~\eqref{p1}. The proposed discretizations guarantee an exponential rate of convergence in the suboptimality measure $f(x_1^k)-f^*$, where $k$ is the iteration index. 

\begin{Rem}[Weaker regularity than strong convexity]
\label{rem_2}
The PL inequality is a weaker requirement than strong convexity. 
Notice that although the class of functions that satisfy the PL inequality are in general non-convex, the set of minimizers of such functions should still be a convex set. 
\end{Rem}

\begin{Rem}[Hybrid embedding of restarting]
\label{rem_3}
The hybrid frameworks intrinsically capture restarting schemes through the jump map. The schemes are a weighted gradient where the weight factor $\beta_{\textbf{I}}$ or $\beta_{\textbf{II}}$ is essentially characterized by the given data $\alpha$, $\mu_f$, $\ell_f$, and $L_f$. One may inspect that the constant $\beta_{\textbf{I}}$ or $\beta_{\textbf{II}}$ can be in fact introduced as a state-dependent weight factor to potentially improve the performance. Nonetheless, for the sake of simplicity of exposition, we do not pursue this level of generality in this paper. 
\end{Rem}

\begin{Rem}[2nd-order information]
\label{rem_40}
Although our proposed frameworks require 2nd-order information, i.e., the Hessian $\nabla^2 f$, this requirement only appears in a mild form as an evaluation in the same spirit as the modified Newton step proposed in \cite{nesterov2006cubic}. Furthermore, we emphasize that our results still hold true if one replaces $\nabla^2 f(x_1)$ with its upper-bound $L_f I_n$ following essentially the same analysis. For further details we refer the reader to the proof of Theorem~\ref{theo_step_conv}.
\end{Rem}

\begin{Rem}[Fundamental limits on control input]
\label{rem_step_input}
An implication of Theorem \ref{theo_step_conv} is that if the desired convergence rate $\alpha > \big(\frac{2\mu_f}{2\mu_f+\ell_f}\big)$, it is then required to choose $\ul{u}_{\textbf{II}}<0$, indicating that the system may need to receive energy through a negative damping. On a similar note, Theorem~\ref{theo_1b} asserts that the upper bound requires $\ol{u}_{\textbf{I}}>\alpha$, and if $\alpha > \big(\frac{2 \mu_f}{\sqrt{\max\{L_f-2\mu_f,0\}}}\big)$, we then have to set $\ul{u}_{\textbf{I}}<0$ \cite[Remark~3.4]{armanICML}.
\end{Rem}

\subsection{Discrete-Time Dynamics}
In the next result, we show that if one applies the forward-Euler method on the two proposed structures properly, the resulting discrete-time hybrid control systems possess exponential convergence rates. Suppose $i\in\{\textbf{I},\textbf{II}\}$ and let us denote by $s$ the time-discretization step size. Consider the discrete-time hybrid control system
\begin{equation}
\label{d1}
x^{k+1} =\left\lbrace
\begin{array}{lc}
F_{d,i}\big(x^k,u_{d,i}(x^k)\big), & x^k \in \mathcal{C}_{d,i}\\
 G_{d,i}(x^k), & \text{otherwise},
\end{array}
\right.
\end{equation}
where $F_{d,i}$, $G_{d,i}$, and $\mathcal{C}_{d,i}$ are the flow map, the jump map, and the flow set, respectively. The discrete flow map $F_{d,i}:\mathbb{R}^{2n}\times \mathbb{R}\rightarrow\mathbb{R}^{2n}$ is given by 
\begin{subequations}
\label{dHd}
\begin{align}
\label{d1_1}
F_{d,i}\big(x^k,u_{d,i}(x^k) \big)=x^k+sF_i\big(x^k,u_i(x^k)\big),\; i \in \{ \mathbf{I}, \mathbf{II} \},
\end{align} 
where $F_i$ and $u_i$ are defined in \eqref{s1} and \eqref{s8_1}, or \eqref{step_01} and \eqref{step_02} based on the considered structure~$i$. The discrete flow set $\mathcal{C}_{d,i}\subset \mathbb{R}^{2n}$ is defined as
\begin{align}
\label{d2_1}
\mathcal{C}_{d,i} =  \big\{(x_1^k,x_2^k)\in\mathbb{R}^{2n}:
 c_1 \| x^k_2 \|^2 \leq \| \nabla f(x^k_1)  \|^2 \leq c_2 \langle \nabla f(x^k_1), -x^k_2 \rangle   \big\},
\end{align}
and, $c_1$ and $c_2$ are two positive scalars. The discrete jump map $G_{d,i}:\mathbb{R}^{2n}\rightarrow\mathbb{R}^{2n}$ is given by $G_{d,i}(x^k)=\big((x^k)^\top,-\beta \nabla^\top f(x^k)\big)^\top$.
\end{subequations}

It is evident in the flow sets $\mathcal{C}_{d,i}$ of the discrete-time dynamics that these sets are no longer defined based on admissible input intervals. The reason has to do with the difficulties that arise from appropriately discretizing the control inputs $u_{\textbf{I}}$ and $u_{\textbf{II}}$. Nonetheless, the next result guarantees exponential rate of convergence of the discrete-time control system~\eqref{d1} with either of the respective structure \textbf{I} or \textbf{II}, by introducing a mechanism to set the scalars $c_1$, $c_2$, and $\beta$.

\begin{Thm}[Stable discretization - \textbf{I \& II}]
\label{theo_2}
Consider a smooth function $f: \mathbb{R}^n\rightarrow \mathbb{R}$ satisfying  Assumptions~\eqref{p2} and \eqref{d_1}. The solution trajectory of the discrete-time hybrid control system~(\ref{d1}) with the respective structure $i \in \{\mathbf{I},\mathbf{II}\}$ and starting from any initial condition $x_1^0$, satisfies
\begin{align}
\label{d4}
f(x_1^{k+1})-f^* \leq \lambda(s,c_1,c_2,\beta) \big( f(x_1^k) - f^*\big), 
\end{align}
with $\lambda(s,c_1,c_2,\beta)\in (0,1)$ given by
\begin{align}
\label{d_r}
\lambda(s,c_1,c_2,\beta):=1+ 2 \mu_f \big( - \frac{s}{c_2} + \frac{L_f}{2 c_1} s^2 \big),
\end{align}
if the parameters $s$, $c_1$ ,$c_2$, and $\beta$ satisfy
\begin{subequations}
\label{d4_d}
\begin{align}
& \sqrt{c_1} \leq c_2, \label{d4_d1}\\
&\beta^2 c_1 \leq 1 \leq \beta c_2 ,\label{d4_d2}\\
&c_2 L_f s < 2 c_1.
\end{align}
\end{subequations}
\end{Thm}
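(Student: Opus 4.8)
The plan is to exploit a structural coincidence between the two dynamics: in \emph{both} $F_{\mathbf{I}}$ and $F_{\mathbf{II}}$ the first block of the flow map is identically $x_2$, so on any flow step the first component evolves by the common rule $x_1^{k+1}=x_1^k+s\,x_2^k$, regardless of $i\in\{\mathbf{I},\mathbf{II}\}$. Since the flow set \eqref{d2_1} and the jump map are also stated uniformly in $i$, the entire estimate for $f(x_1)$ can be produced once, using only this update together with the two inequalities encoded in \eqref{d2_1}; the differing second components of $F_{\mathbf{I}}$ and $F_{\mathbf{II}}$ never enter. This is exactly what makes the single statement cover both structures.

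For a flow step, i.e. $x^k\in\mathcal{C}_{d,i}$, I would begin from the descent lemma, valid because \eqref{p2} makes $\nabla f$ Lipschitz with constant $L_f$ (see \eqref{p2_g}):
\begin{equation*}
f(x_1^{k+1})\le f(x_1^k)+s\,\langle \nabla f(x_1^k),x_2^k\rangle+\tfrac{L_f s^2}{2}\,\|x_2^k\|^2.
\end{equation*}
Subtracting $f^*$ and substituting the flow-set bounds from \eqref{d2_1}, namely $\langle \nabla f(x_1^k),-x_2^k\rangle\ge c_2^{-1}\|\nabla f(x_1^k)\|^2$ and $\|x_2^k\|^2\le c_1^{-1}\|\nabla f(x_1^k)\|^2$, collapses both remaining terms into a single multiple of $\|\nabla f(x_1^k)\|^2$ with coefficient $-\tfrac{s}{c_2}+\tfrac{L_f s^2}{2c_1}$. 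The third condition in \eqref{d4_d}, namely $c_2 L_f s<2c_1$, renders this coefficient strictly negative, which is precisely what lets me invoke the PL inequality \eqref{d_1} in the \emph{lower-bound} form $\|\nabla f(x_1^k)\|^2\ge 2\mu_f\,(f(x_1^k)-f^*)$: multiplying a lower bound by a negative scalar preserves the ``$\le$'' direction and delivers \eqref{d4} with $\lambda$ exactly as in \eqref{d_r}. To place $\lambda$ in $(0,1)$, note $\lambda<1$ is immediate from $c_2 L_f s<2c_1$, while for $\lambda>0$ I would either minimize the quadratic $s\mapsto\lambda$ or apply AM--GM to $1+\tfrac{\mu_f L_f s^2}{c_1}\ge 2s\sqrt{\mu_f L_f/c_1}$, reducing the claim to $\mu_f c_1\le L_f c_2^2$, which follows from $\sqrt{c_1}\le c_2$ in \eqref{d4_d1} together with the standard fact $\mu_f\le L_f$ for PL functions with $L_f$-Lipschitz gradient.

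It then remains to reconcile the jump with this flow estimate. First, \eqref{d4_d1} guarantees via Cauchy--Schwarz that the flow set \eqref{d2_1} is consistent (non-empty), since any admissible point forces $c_1\|x_2\|^2\le\|\nabla f(x_1)\|^2\le c_2^2\|x_2\|^2$. Second, the two-sided condition \eqref{d4_d2} is what makes a jump re-enter the flow set: after a jump the first component is frozen and $x_2^{k+1}=-\beta\nabla f(x_1^k)$, so that $\|x_2^{k+1}\|^2=\beta^2\|\nabla f(x_1^k)\|^2$ and $\langle\nabla f(x_1^k),-x_2^{k+1}\rangle=\beta\|\nabla f(x_1^k)\|^2$, whereupon the two membership inequalities of \eqref{d2_1} reduce exactly to $\beta^2 c_1\le 1\le\beta c_2$. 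Because $x_1$ is unchanged by a jump, $f(x_1)$ is invariant across jumps, so $f(x_1^k)-f^*$ is non-increasing along the whole trajectory and is contracted by $\lambda$ on every flow step, with flow steps recurring because each jump lands back in $\mathcal{C}_{d,i}$.

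I expect the principal obstacle to lie not in any single computation but in two bookkeeping points. The first is the sign when passing to the PL bound: the estimate is only a contraction because the coefficient of $\|\nabla f\|^2$ is negative, and one must use the PL inequality as a lower bound there, which is the opposite of its usual usage. The second is making the jump/flow interplay airtight, that is, verifying that the re-initialized velocity simultaneously satisfies \emph{both} inequalities defining $\mathcal{C}_{d,i}$; this is exactly the content of the two-sided bound \eqref{d4_d2}, and it is the step that ties the admissible range of $\beta$ to the flow-set geometry.
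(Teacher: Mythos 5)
Your proposal follows essentially the same route as the paper's proof: the descent lemma from the Lipschitz gradient \eqref{p2_g}, substitution of the two flow-set inequalities in \eqref{d2_1}, application of the PL inequality \eqref{d_1} against a negative coefficient enforced by $c_2L_fs<2c_1$, non-emptiness of the flow set via Cauchy--Schwarz and \eqref{d4_d1}, re-entry of the jump map into the flow set via \eqref{d4_d2}, and the observation that only the common $x_1$-update matters so a single argument covers both structures (exactly the content of Remark~\ref{rem_5}). You in fact go slightly beyond the paper on two points it glosses over: explicitly verifying $\lambda>0$ (via AM--GM together with $\sqrt{c_1}\le c_2$ and $\mu_f\le L_f$) and noting that jump steps merely preserve, rather than contract, $f(x_1^k)-f^*$, so the contraction \eqref{d4} is genuinely a flow-step estimate.
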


\begin{Rem}[Naive discretization]
\label{rem_5}
We would like to emphasize that the exponential convergence of the proposed discretization method solely depends on the dynamics $x_1$ and the properties of the objective function $f$. Thus, we deliberately avoid labeling the scalars $c_1$, $c_2$, and $\beta$ by the structure index $i$. Crucially, the structures of the control laws do not impact the relations~\eqref{d4_d} in Theorem~\ref{theo_2}, see Subsection~\ref{subsec:proof_2} for more details. In light of the above facts, we believe that a more in-depth analysis of the dynamics along with the control structures may provide a more intelligent way to improve the discretization result of Theorem~\ref{theo_2}.
\end{Rem}

\begin{Cor}[Optimal guaranteed rate]
\label{Cor_1}
The optimal convergence rate guaranteed by Theorem~\ref{theo_2} for the discrete-time dynamics is $\lambda^*:=\big(1-\frac{\mu_f}{L_f} \big)$ and
\begin{align*}
&\sqrt{c_1^*}  = c^*_2 =  \frac{1}{\beta^*} = L_f s^*.
\end{align*}
\end{Cor}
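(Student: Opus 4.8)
The plan is to read Corollary~\ref{Cor_1} as the explicit solution of a small optimization problem: minimize the guaranteed contraction factor $\lambda(s,c_1,c_2,\beta)$ from \eqref{d_r} over all parameters $(s,c_1,c_2,\beta)$ that are feasible for Theorem~\ref{theo_2}, i.e. that satisfy \eqref{d4_d}. First I would note that since $\mu_f>0$ and the additive constant plays no role, minimizing $\lambda$ is equivalent to minimizing
\[
g(s,c_1,c_2):= -\frac{s}{c_2}+\frac{L_f}{2c_1}s^2 .
\]
The key structural observation is that $\beta$ does not enter $g$ at all: its sole appearance is in \eqref{d4_d2}, which requires $1/c_2 \le \beta \le 1/\sqrt{c_1}$, and such a $\beta$ exists precisely when $\sqrt{c_1}\le c_2$, i.e. exactly constraint \eqref{d4_d1}. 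Hence $\beta$ can be eliminated from the objective, and it will be pinned to the value $\beta=1/c_2$ at any point where \eqref{d4_d1} is active.

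Next I would minimize over $c_1$ with $(s,c_2)$ fixed. Since $\partial g/\partial c_1 = -L_f s^2/(2c_1^2)<0$, the objective $g$ is strictly decreasing in $c_1$, so it is optimal to push $c_1$ to its largest admissible value $c_1=c_2^2$ (the equality case of \eqref{d4_d1}); this choice also only relaxes the remaining constraint $c_2 L_f s<2c_1$. Substituting $c_1=c_2^2$ and introducing the single scalar $y:=s/c_2>0$ collapses the problem to
\[
g=-y+\tfrac{L_f}{2}y^2 ,
\]
a convex parabola whose unconstrained minimizer is $y^*=1/L_f$ with value $g^*=-1/(2L_f)$. Back-substitution then yields $\lambda^*=1+2\mu_f g^* = 1-\mu_f/L_f$, while $c_1=c_2^2$ together with $s/c_2=y^*=1/L_f$ gives $\sqrt{c_1^*}=c_2^*=L_f s^*$ and, from the pinned value above, $\beta^*=1/c_2^*=1/(L_f s^*)$, which is exactly the chain of equalities in the statement.

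Two points then remain. The first is feasibility of the remaining strict inequality at the optimum: with $c_2=L_f s$ and $c_1=c_2^2$ one has $c_2 L_f s=(L_f s)^2=c_1<2c_1$ (equivalently $y^*=1/L_f<2/L_f$), so the candidate lies strictly inside the feasible set and the minimum is genuinely attained. The second is that the problem is invariant under the scaling $(s,c_1,c_2,\beta)\mapsto(\kappa s,\kappa^2 c_1,\kappa c_2,\kappa^{-1}\beta)$ for any $\kappa>0$, which leaves both $g$ and all constraints in \eqref{d4_d} unchanged; consequently the optimizer is a one-parameter family rather than a point, and the corollary correctly records it as a set of ratios anchored to the free choice of $s^*$. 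I expect no serious obstacle here: the computation is an elementary one-variable minimization, and the only care required is the bookkeeping around the $\beta$ constraint---recognizing that it contributes nothing to the objective but exactly reproduces \eqref{d4_d1} and fixes $\beta^*$---together with the confirmation that $\sqrt{c_1}=c_2$ is the binding constraint. As a consistency check, $\lambda^*\in(0,1)$ forces $\mu_f\le L_f$, in agreement with the guarantees of Theorem~\ref{theo_2}.
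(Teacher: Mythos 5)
Your proposal is correct. The paper states Corollary~\ref{Cor_1} without an explicit proof---it is the elementary minimization of $\lambda$ in \eqref{d_r} over the constraint set \eqref{d4_d} implicit in Theorem~\ref{theo_2}---and your argument (eliminating $\beta$ via \eqref{d4_d2}, which exactly reproduces \eqref{d4_d1} and pins $\beta^*=1/c_2^*$; saturating $\sqrt{c_1}=c_2$; then minimizing the parabola in $y=s/c_2$, with the check that $c_2L_fs<2c_1$ holds strictly at the optimum) is precisely that computation, carried out correctly and matching the stated values $\sqrt{c_1^*}=c_2^*=1/\beta^*=L_fs^*$ and $\lambda^*=1-\mu_f/L_f$.
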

The pseudocode to implement the above corollary is presented in Algorithm~\ref{alg:example} using the discrete-time dynamics~\eqref{d1} with the respective parameters \textbf{I} or \textbf{II}.
\begin{algorithm}[tb]
   \caption{Sate-dependent fast gradient method}
   \label{alg:example}
\begin{algorithmic}
   \STATE {\bfseries Input:} data $x_1^0$, $\ell_f$, $L_f$, $\mu_f$, $\alpha \in \mathbb{R}^+$, $k_{\max} \in \mathbb{N}^+$, $i\in\{ \mathbf{I}, \mathbf{II} \}$
   \STATE {\bfseries Set:} $\sqrt{c_1}  = c_2 =  \beta^{-1} = L_f s$, $x_2^0=-\beta \nabla f(x_1^0)$\\
   $\quad\quad\;\; x^0=(x_1^0,x_2^0)$
   \FOR{$k=1$ {\bfseries to} $k_{\max}$}
   \IF{$c_1 \| x^k_2 \|^2 \leq \| \nabla f(x^k_1)  \|^2 \leq c_2 \langle  \nabla f(x^k_1),-x^k_2 \rangle$}
   \STATE $x^{k+1} \leftarrow F_{d,i}(x^k)$
   \ELSE
   \STATE $x^{k+1} \leftarrow G_{d,i}(x^k)$
   \ENDIF
   \ENDFOR
\end{algorithmic}
\end{algorithm}

\begin{Rem}[Gradient-descent rate matching]
Notice that the rate $1-\frac{\mu_f}{L_f}$ in Corollary~\ref{Cor_1} is equal to the rate guaranteed by the gradient descent method for functions that satisfy the PL inequality~\eqref{d_1}, see e.g., \cite{Karimi2016}. This is in fact another inefficiency indicator of a straightforward application of the forward-Euler method on the continuous-time hybrid control systems that are proposed in this paper. Moreover, it is worth emphasizing that Nesterov's fast method achieves the optimal rate $1-\sqrt{\frac{\sigma_f}{L_f}}$ for strongly convex functions with the strong convexity constant $\sigma_f$ \cite{Nesterov2004}.
\end{Rem}

\section{Technical Proofs}
\label{sec:proofs}

\subsection{Proof of Theorem~\ref{theo_zeno}}
\label{subsec:pro_zeno1}
In this subsection, we first set the stage by providing two intermediate results regarding the properties of dynamics of the hybrid control system~(\ref{p1}) with the respective parameters (\ref{sH}). We then employ these facts to formally state the proof of Theorem~\ref{theo_zeno}. The next lemma reveals a relation between $\nabla f(x_1)$ and $x_2$ along the trajectories of the hybrid control system. In this subsection, for the sake of brevity we denote $x_1(t)$ and $x_1(0)$ by $x_1$ and $x_{1,0}$, respectively. We adapt the same change of notation for $x_2$ and $x$, as well.
\begin{Lem}[Velocity lower bound]
\label{lem_z1}
Consider the continuous-time hybrid control system (\ref{p1}) with the respective parameters (\ref{sH}) satisfying (\ref{eqt_1b}) where the function $f$ satisfies Assumptions~(\ref{p2}) and (\ref{d_1}). Then, we have
\begin{equation}
\begin{aligned}
\label{z1}
\big\| \nabla f(x_1) \big\| \leq C  \| x_2 \| , 
\end{aligned}
\end{equation}
where $C$ is given by (\ref{z_t2_2}).
\end{Lem}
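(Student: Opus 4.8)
The plan is to reduce the vector inequality \eqref{z1} to a scalar quadratic inequality in the ratio $\rho := \|\nabla f(x_1)\|/\|x_2\|$ and show that membership in the flow set forces $\rho \le C$. Throughout a flow interval the state satisfies $x \in \mathcal{C}_{\textbf{I}}$, hence by \eqref{s8_2} the control obeys $u_{\textbf{I}}(x) \le \ol{u}_{\textbf{I}}$. Rewriting the definition \eqref{s8_1} of $u_{\textbf{I}}$, this upper bound reads
\begin{equation*}
\| \nabla f(x_1) \|^2 - \langle \nabla^2 f(x_1) x_2, x_2 \rangle \le (\ol{u}_{\textbf{I}} - \alpha)\, \langle \nabla f(x_1), -x_2 \rangle,
\end{equation*}
provided $\langle \nabla f(x_1), -x_2 \rangle > 0$ so that clearing the denominator preserves the inequality. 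I will treat the degenerate case $\langle \nabla f(x_1), -x_2 \rangle = 0$ separately, where (see below) $\nabla f(x_1) = 0$ and \eqref{z1} holds trivially.

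First I would bound the two terms on the right using the regularity assumptions: the upper Hessian bound in \eqref{p2} gives $\langle \nabla^2 f(x_1) x_2, x_2 \rangle \le L_f \| x_2 \|^2$, while Cauchy--Schwarz gives $\langle \nabla f(x_1), -x_2 \rangle \le \| \nabla f(x_1) \| \, \| x_2 \|$; since $\ol{u}_{\textbf{I}} - \alpha > 0$ by \eqref{eqt_1b2}, substituting both yields $\| \nabla f(x_1) \|^2 \le L_f \| x_2 \|^2 + (\ol{u}_{\textbf{I}} - \alpha)\, \| \nabla f(x_1) \| \, \| x_2 \|$. Dividing by $\| x_2 \|^2$ turns this into $\rho^2 - (\ol{u}_{\textbf{I}} - \alpha)\rho - L_f \le 0$. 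The left-hand side is an upward parabola whose larger root is precisely $C$ as defined in \eqref{z_t2_2}, so $\rho \le C$, which is exactly \eqref{z1}.

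I expect the main obstacle to be justifying the sign condition $\langle \nabla f(x_1), -x_2 \rangle \ge 0$, i.e. that $x_2$ remains a descent direction along the flow; this is what licenses both the clearing of the denominator and the replacement of $\langle \nabla f(x_1), -x_2 \rangle$ by its Cauchy--Schwarz upper bound. The plan here is to exploit the invariance of the level set $\sigma(t) = \langle \nabla f(x_1), x_2 \rangle + \alpha\big(f(x_1) - f^*\big)$ noted after \eqref{s8_1}. At the reset initial condition $x_2 = -\beta_{\textbf{I}} \nabla f(x_1)$ one has $\sigma = -\beta_{\textbf{I}} \| \nabla f(x_1) \|^2 + \alpha\big(f(x_1) - f^*\big)$; combining the PL inequality \eqref{d_1}, which gives $f(x_1) - f^* \le \tfrac{1}{2\mu_f}\|\nabla f(x_1)\|^2$, with \eqref{eqt_1b3}, which gives $\tfrac{\alpha}{2\mu_f} \le \beta_{\textbf{I}}$, shows $\sigma \le 0$ at reset and hence, by invariance, throughout the flow. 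Consequently $\langle \nabla f(x_1), -x_2 \rangle \ge \alpha\big(f(x_1) - f^*\big) \ge 0$, with equality forcing $f(x_1) = f^*$ and thus $\nabla f(x_1) = 0$, which is the degenerate case already disposed of. This closes the argument.
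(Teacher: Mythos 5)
Your proof is correct and follows essentially the same route as the paper's own: the flow-set condition $u_{\textbf{I}}(x)\le \ol{u}_{\textbf{I}}$, the Hessian bound \eqref{p2}, and the Cauchy--Schwarz inequality reduce the claim to the scalar quadratic inequality $y^2-(\ol{u}_{\textbf{I}}-\alpha)\,y-L_f\le 0$ in $y=\big\|\nabla f(x_1)\big\|/\|x_2\|$, whose larger root is exactly $C$ in \eqref{z_t2_2}. Your additional verification that $\langle \nabla f(x_1),-x_2\rangle>0$ along the flow (via the invariance of $\sigma$, the PL inequality \eqref{d_1}, and condition \eqref{eqt_1b3}), together with the observation that $\ol{u}_{\textbf{I}}-\alpha>0$ follows from \eqref{eqt_1b2}, supplies sign justifications that the paper's proof leaves implicit when clearing the denominator and applying Cauchy--Schwarz.
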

\begin{proof}
Notice that, by the definition of the control law and the upper bound condition $u_{\textbf{I}}(x) \leq \ol{u}_{\textbf{I}}$, we have
\begin{align*}
 \big\| \nabla f(x_1) \big\|^2 - \langle \nabla^2 f(x_1) x_2 , x_2 \rangle 
 \leq  (\ol{u}_{\textbf{I}} - \alpha) \langle \nabla f(x_1), -x_2 \rangle \leq 
 (\ol{u}_{\textbf{I}} - \alpha) \big\| \nabla f(x_1) \big\| \cdot \| x_2 \|,
\end{align*}
where the second inequality follows from the Cauchy-Schwarz inequality. Since the function $f$ satisfies Assumption~(\ref{p2}), one can infer that
\begin{align*}
\big\| \nabla f(x_1) \big\|^2 - L_f \| x_2 \|^2 \leq (\ol{u}_{\textbf{I}} - \alpha) \big\| \nabla f(x_1) \big\| \cdot \| x_2\|,
\end{align*}
which in turn can be reformulated into
\begin{align}
\label{z3}
\frac{\big\| \nabla f(x_1) \big\|^2}{\| x_2 \|^2} - (\ol{u}_{\textbf{I}} - \alpha) \frac{\big\| \nabla f(x_1) \big\|}{\| x_2 \|} - L_f \leq 0.
\end{align}
Defining the variable $y:= \big\| \nabla f(x_1) \big\| / \| x_2 \|$, the inequality~(\ref{z3}) becomes the quadratic inequality $y^2 - (\ol{u}_{\textbf{I}} - \alpha) y -L_f \leq 0$. Taking into account that $y\geq 0$, it then follows from (\ref{z1}) that
\begin{align*}
y=\frac{\big\| \nabla f(x_1) \big\|}{\| x_2 \|} \leq \frac{(\ol{u}_{\textbf{I}} - \alpha) + \sqrt{(\ol{u}_{\textbf{I}} - \alpha)^2 + 4L_f} }{2}  =: C.
\end{align*}
This concludes the proof of Lemma~\ref{lem_z1}.
\end{proof}

In the following, we provide a result that indicates the variation of norms $x_1$ and $x_2$, along the trajectories of the hybrid control system, are bounded in terms of time while they evolve according to the continuous mode. Since the hybrid control system is time-invariant, such bounds can be generalized to all inter-jump intervals.
\begin{Lem}[Trajectory growth rate]
\label{lem_z2}
Suppose that the same conditions as specified in Lemma~\ref{lem_z1} hold, and the hybrid control system (\ref{p1}), (\ref{sH}) starts from the initial condition $\big(x_{1,0},-\beta_{\textbf{I}} \nabla f(x_{1,0}) \big)$ for some $x_{1,0} \in \mathbb{R}^n$. Then
\begin{subequations}
\label{z4_m}
\begin{align}
\label{z4_0}
& \| x_1 - x_{1,0} \| \leq \delta^{-1} \|  x_{2,0}  \| \big( e^{\delta t} -1 \big),\\
\label{z4_1}
&\| x_2 - x_{2,0} \| \leq  \| x_{2,0} \| \big( e^{\delta t} - 1  \big),
\end{align}
\end{subequations}
where $\delta$ is given by (\ref{z_t2_3}).
\end{Lem}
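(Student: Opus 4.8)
The plan is to reduce both estimates to a single exponential envelope on the velocity magnitude $\| x_2(t) \|$, from which the two integral inequalities follow by integrating the flow. Along the continuous mode the dynamics read $\dot{x}_1 = x_2$ and $\dot{x}_2 = -\nabla f(x_1) - u_{\textbf{I}}(x)\, x_2$, and since the trajectory remains in the flow set $\mathcal{C}_{\textbf{I}}$ we have $u_{\textbf{I}}(x) \in [\ul{u}_{\textbf{I}}, \ol{u}_{\textbf{I}}]$, hence $|u_{\textbf{I}}(x)| \leq \max\{\ol{u}_{\textbf{I}}, -\ul{u}_{\textbf{I}}\}$. Combining the triangle inequality with Lemma~\ref{lem_z1} gives the pointwise bound
\begin{align*}
\| \dot{x}_2 \| \leq \| \nabla f(x_1) \| + |u_{\textbf{I}}(x)| \, \| x_2 \| \leq \big( C + \max\{\ol{u}_{\textbf{I}}, -\ul{u}_{\textbf{I}}\} \big) \| x_2 \| = \delta \| x_2 \|,
\end{align*}
which is the one inequality carrying all the structure-specific information, with $\delta$ as in \eqref{z_t2_3}.

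First I would turn this into an exponential envelope for $\| x_2 \|$. To sidestep the non-differentiability of $\| x_2 \|$ at $x_2 = 0$, I would work with the square and estimate $\frac{\diff}{\diff t} \| x_2 \|^2 = 2 \langle x_2, \dot{x}_2 \rangle \leq 2 \| x_2 \|\, \| \dot{x}_2 \| \leq 2\delta \| x_2 \|^2$, where the middle step is Cauchy--Schwarz. A standard Gr\"onwall (comparison) argument then yields $\| x_2(t) \| \leq \| x_{2,0} \|\, e^{\delta t}$ over the whole inter-jump interval.

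With this envelope in hand, both claims reduce to integrating the flow. For the velocity, writing $x_2(t) - x_{2,0} = \int_0^t \dot{x}_2(s)\,\diff s$ and using $\| \dot{x}_2(s) \| \leq \delta \| x_2(s) \| \leq \delta \| x_{2,0} \| e^{\delta s}$ gives $\| x_2 - x_{2,0} \| \leq \int_0^t \delta \| x_{2,0} \| e^{\delta s}\,\diff s = \| x_{2,0} \|\,(e^{\delta t} - 1)$, which is \eqref{z4_1}. For the position, using $\dot{x}_1 = x_2$ and the same envelope gives $\| x_1 - x_{1,0} \| \leq \int_0^t \| x_2(s) \|\,\diff s \leq \int_0^t \| x_{2,0} \| e^{\delta s}\,\diff s = \delta^{-1} \| x_{2,0} \|\,(e^{\delta t} - 1)$, which is \eqref{z4_0}. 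Finally, time-invariance of the hybrid system lets me translate the initial time to the start of any inter-jump interval (the jump map resets $x_2$ precisely to $-\beta_{\textbf{I}} \nabla f(x_1)$), so the bounds hold on each flow segment. The main obstacle is genuinely the first step---establishing the exponential envelope on $\| x_2 \|$---since that is where Lemma~\ref{lem_z1} and the flow-set bound on $u_{\textbf{I}}$ must be combined to identify $\delta$ as the effective growth rate; once this is done, the remaining integrations are routine.
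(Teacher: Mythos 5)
Your proof is correct and takes essentially the same route as the paper: both hinge on the key bound $\| \dot{x}_2 \| \leq \big\| \nabla f(x_1) \big\| + |u_{\textbf{I}}(x)|\,\| x_2 \| \leq \delta \| x_2 \|$ obtained from Lemma~\ref{lem_z1} and the flow-set bound on $u_{\textbf{I}}$, then derive the envelope $\| x_2(t) \| \leq \| x_{2,0} \| e^{\delta t}$ and integrate to get \eqref{z4_0} and \eqref{z4_1}. The only cosmetic differences are that you differentiate $\| x_2 \|^2$ instead of $\| x_2 \|$ (a slightly cleaner way to avoid non-differentiability at $x_2=0$) and that you obtain \eqref{z4_1} by directly integrating $\dot{x}_2$ against the envelope, whereas the paper applies Gr\"onwall's inequality a second time to the quantity $\| x_2 - x_{2,0} \|$; both yield the identical constants.
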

\begin{proof} 
Using the flow dynamics~(\ref{s1}) we obtain
\begin{equation}
\label{z4_30} 
\begin{aligned}
\frac{d}{dt} \| x_2 \| \leq \Big\| \frac{d}{dt} x_2 \Big\| \leq \big\| \nabla f( x_1 ) \big\| + \big| u_{\textbf{I}} (x) \big| \cdot \| x_2 \|  
 \leq (C + \max\{\ol{u}_{\textbf{I}},-\ul{u}_{\textbf{I}}\} ) \| x_2 \| = \delta \| x_2 \|.
\end{aligned}
\end{equation}
The inequality (\ref{z4_30}) implies that
\begin{align}
\label{z4_3}
\| x_2 \| \leq \| x_{2,0} \| e^{\delta t}.
\end{align}
Furthermore, notice that
\begin{align*}
\frac{d}{dt} \| x_1 - x_{1,0} \| &\leq \Big\| \frac{d}{dt} ( x_1 - x_{1,0} )  \Big\| = \|  x_2 \|.
\end{align*}
Integrating the two sides of the above inequality leads to
\begin{align*}
 \| x_1 - x_{1,0} \| \leq \int_0^{t}~ \big\|  x_2(s)  \big\| ~ds \leq \int_0^{t}~ \|  x_{2,0} \| e^{\delta s} ~ds
  = \frac{\|  x_{2,0} \|}{\delta} \big( e^{\delta t} -1 \big),
\end{align*}
in which we made use of (\ref{z4_3}). Hence, the inequality~(\ref{z4_0}) in Lemma~\ref{lem_z1} is concluded. Next, we shall establish the inequality~(\ref{z4_1}). Note that
\begin{equation*}
\begin{aligned}
\frac{d}{dt} \| x_2 - x_{2,0} \|  \leq \Big\| \frac{d}{dt} ( x_2 - x_{2,0} )  \Big\| = \Big\| \frac{d}{dt} x_2  \Big\| \leq \delta \big\| x_2 \big\|
 \leq \delta \| x_2 -x_{2,0} \| + \delta \| x_{2,0} \|.
\end{aligned}
\end{equation*}
Applying Grownwall's inequality \cite[Lemma~A.1]{khalil1996noninear} then leads to the desired inequality~(\ref{z4_1}). The claims in Lemma~\ref{lem_z2} follow.
\end{proof}

\textbf{Proof of Theorem~\ref{theo_zeno}:} The proof comprises five steps, and the key part is to guarantee that during the first inter-jump interval the quantity $\big|u_{\textbf{I}}( x ) - u_{\textbf{I}}( x_{,0} )\big|$ is bounded by a continuous function $\phi\Big(t,\big\|\nabla f(x_{1,0})\big\|\Big)$, which is exponential in its first argument and linear in its second argument. Then, it follows from the continuity of the function $\phi$ that the solution trajectories of the hybrid control system are Zeno-free.  

\textbf{Step 1:} Let us define $g(t):=\langle \nabla f(x_1), -x_2  \rangle$. We now compute the derivative of $g(t)$ along the trajectories of the hybrid control system~(\ref{p1}),~(\ref{sH}) during the first inter-jump interval, i.e.,
\begin{equation*}
\begin{aligned}
 \frac{d}{dt} g(t)& 
  = \langle \nabla^2 f( x_1) x_2, -x_2 \rangle + \langle \nabla f( x_1), u_{\textbf{I}} ( x ) x_2 + \nabla f( x_1 ) \rangle \\
&=  - \langle \nabla^2 f( x_1) x_2, x_2 \rangle + \big\| \nabla f( x_1) \big\|^2 + u_{\textbf{I}} ( x) \langle \nabla f( x_1), x_2  \rangle \\
& = - \alpha \langle \nabla f(x_1), -x_2  \rangle = - \alpha ~ g(t).
\end{aligned}
\end{equation*}
According to the above discussion and considering the initial state $x_{2,0}=- \beta_{\textbf{I}} \nabla f(x_{1,0})$, it follows that 
\begin{align}
\label{z5}
\langle \nabla f(x_1), -x_2 \rangle = \beta_{\textbf{I}} \big\| \nabla f( x_{1,0}) \big\|^2 e ^{-\alpha t}.
\end{align}

\textbf{Step 2:} The quantity $\Big| e^{\alpha t} \big\| \nabla f( x_1)  \big\|^2 - \big\| \nabla f( x_{1,0})  \big\|^2 \Big|$ is bounded along the trajectories of the hybrid control system~(\ref{p1}) with the respective parameters (\ref{sH}) during the first inter-jump interval, i.e.,
\begin{equation*}
\label{z7}
\begin{aligned}
 \Big| e^{\alpha t} \big\| \nabla f( x_1 )  \big\|^2 - \big\| \nabla f( x_{1,0})  \big\|^2 \Big|  
& = \Big| e^{\alpha t} \big\| \nabla f( x_1)  \big\|^2 - (e^{\alpha t}-e^{\alpha t} + 1) \big\| \nabla f( x_{1,0})  \big\|^2 \Big| \\
& \overset{\text{(i)}}{\leq} e^{\alpha t}  \Big|\big\| \nabla f( x_1)  \big\|^2 - \big\| \nabla f( x_{1,0})  \big\|^2 \Big|    + (e^{\alpha t} - 1) \big\| \nabla f( x_{1,0})  \big\|^2 \\
& = e^{\alpha t}  \Big|\big\langle \nabla f( x_1) -  \nabla f( x_{1,0}),\nabla f( x_1) +  \nabla f( x_{1,0})  \big\rangle \Big|   \\
 &\qquad\quad  + (e^{\alpha t} - 1) \big\| \nabla f( x_{1,0})  \big\|^2 \\
& \overset{\text{(ii)}}{\leq} e^{\alpha t}  \big\| \nabla f( x_1) -  \nabla f( x_{1,0})\big\|\cdot \big\| \nabla f( x_1) +  \nabla f( x_{1,0})  \big\|  \\
 & \qquad\quad  + (e^{\alpha t} - 1) \big\| \nabla f( x_{1,0})  \big\|^2 \\
& \overset{\text{(iii)}}{\leq} e^{\alpha t} L_f  \|  x_1 -  x_{1,0} \| \cdot 
 \big(\beta_{\textbf{I}} C e^{\delta t} + 1 \big) \frac{\| x_{2,0} \|}{\beta_{\textbf{I}}}    + \big(e^{\alpha t} - 1\big) \frac{\| x_{2,0} \|^2}{\beta_{\textbf{I}}^2}  \\
& \overset{\text{(iv)}}{\leq} e^{\alpha t} L_f \big(e^{\delta t} - 1 \big)  \frac{\| x_{2,0} \|}{\delta} \cdot 
 \big(\beta_{\textbf{I}} C e^{\delta t} + 1 \big) \frac{\| x_{2,0} \|}{\beta_{\textbf{I}}}    + \big(e^{\alpha t} - 1\big) \frac{\| x_{2,0} \|^2}{\beta_{\textbf{I}}^2} \\
&= \left( \frac{L_f}{\delta \beta_{\textbf{I}}}  e^{\alpha t} \big(\beta_{\textbf{I}} C e^{\delta t} + 1 \big) \big(e^{\delta t} - 1 \big) +
\frac{1}{\beta_{\textbf{I}}^2} \big(e^{\alpha t} - 1\big) \right)    \|  x_{2,0}\|^2,
\end{aligned}
\end{equation*}
where we made use of the triangle inequality in the inequality (i), the Cauchy-Schwarz inequality in the inequality (ii), Assumption~(\ref{p2}) and its consequence in Remark~\ref{rem_lip} along with the triangle inequality in the inequality (iii), and the inequality (\ref{z4_0}) in the inequality (iv), respectively.

\textbf{Step 3:} 
Observe that
\begin{equation*}
\label{z8}
\begin{aligned}
& \big| e^{\alpha t} \langle \nabla^2 f(x_1) x_2,x_2 \rangle 
- \langle \nabla^2 f(x_{1,0}) x_{2,0},x_{2,0}  \rangle \big| \\
& \quad = \Big| e^{\alpha t} \big\langle \big[\nabla^2 f(x_1)-\nabla^2 f(x_{1,0})+\nabla^2 f(x_{1,0})\big] x_2,x_2  \big\rangle  - \big(e^{\alpha t}-e^{\alpha t} + 1\big) \langle \nabla^2 f(x_{1,0}) x_{2,0},x_{2,0} \rangle \Big| \\
&\quad = \Big| e^{\alpha t} \big\langle  \big[\nabla^2 f(x_1)-\nabla^2 f(x_{1,0})\big] x_2,x_2  \big\rangle  + e^{\alpha t} \langle \nabla^2 f(x_{1,0}) x_2,x_2  \rangle  - e^{\alpha t} \langle \nabla^2 f(x_{1,0}) x_{2,0},x_{2,0} \rangle \\
&\qquad\quad + \big(e^{\alpha t}- 1 \big) \langle \nabla^2 f(x_{1,0}) x_{2,0},x_{2,0} \rangle \Big| \\
&\quad \overset{\text{(i)}}{\leq} e^{\alpha t} \Big| \big\langle  \big[\nabla^2 f(x_1)-\nabla^2 f(x_{1,0})\big] x_2,x_2  \big\rangle \Big|  + e^{\alpha t} \Big| \langle \nabla^2 f(x_{1,0}) x_2,x_2  \rangle - \langle \nabla^2 f(x_{1,0}) x_{2,0},x_{2,0}  \rangle \Big| \\
&\qquad\quad + \big(e^{\alpha t}- 1 \big) \Big| \langle \nabla^2 f(x_{1,0}) x_{2,0},x_{2,0} \rangle \Big| \\
&\quad \overset{\text{(ii)}}{\leq} e^{\alpha t} H_f \| x_1 - x_{1,0} \| \cdot \| x_2 \|^2  + e^{\alpha t} \Big|  \big\langle \nabla^2 f(x_{1,0}) \big[ x_2 - x_{2,0}\big], x_2 + x_{2,0} \big\rangle \Big|   + \mathcal{L}_f \| x_{2,0} \|^2 \big(e^{\alpha t} - 1 \big),
\end{aligned}
\end{equation*}
where the inequality (i) follows from the triangle inequality, and the inequality (ii) is an immediate consequence of Assumptions~\eqref{z6} and \eqref{p2}, recalling $\mathcal{L}_f=\max\{\ell_f,L_f \}$. According to the above analysis, one can deduce that
\begin{equation*}
\label{z8}
\begin{aligned}
& \big| e^{\alpha t} \langle \nabla^2 f(x_1) x_2,x_2 \rangle 
- \langle \nabla^2 f(x_{1,0}) x_{2,0},x_{2,0}  \rangle \big| \\
& \overset{\text{(i)}}{\leq} e^{\alpha t} H_f \frac{\| x_{2,0} \|}{\delta} \big(e^{\delta t} - 1 \big) \cdot e^{2\delta t} \|  x_{2,0} \|^2  + e^{\alpha t} \mathcal{L}_f \| x_2 - x_{2,0}  \| \cdot \| x_2 + x_{2,0} \|  + \big( e^{\alpha t} -1 \big) \mathcal{L}_f \| x_{2,0} \|^2 \\
& \overset{\text{(ii)}}{\leq}  \frac{ H_f }{\delta} e^{(\alpha+2\delta) t}  \big\|  x_2(0) \big\|^3 \cdot (e^{\delta t} - 1)   + e^{\alpha t} \mathcal{L}_f \big(e^{\delta t} - 1\big) \| x_{2,0} \| \cdot \big(e^{\delta t} + 1\big) \| x_{2,0} \|   + \mathcal{L}_f \| x_{2,0} \|^2 \big( e^{\alpha t} -1 \big) \\
& =  \Big(
( H_f / \delta) ~ e^{(\alpha+2\delta) t}  \|  x_{2,0} \| \cdot \big( e^{\delta t} - 1 \big)   + \mathcal{L}_f  \big( e^{(\alpha+\delta) t} + e^{\alpha t} \big) \big( e^{\delta t} - 1 \big)  + \mathcal{L}_f (e^{\alpha t} -1) \Big) \| x_{2,0} \|^2,
\end{aligned}
\end{equation*}
where we made use of the inequality (\ref{z4_0}), the inequality (\ref{z4_1}), and the triangle inequality in the inequality (i), and the inequality (\ref{z4_1}) and the triangle inequality in the inequality~(ii), respectively.

\textbf{Step 4:} We now study the input variation $\big| u_{\textbf{I}}( x) - u_{\textbf{I}}( x_{,0} )  \big|$ along the solution trajectories of the hybrid control system~\eqref{p1}, \eqref{sH} during the first inter-jump interval. Observe that 
\begin{equation*}
\label{z9}
\begin{aligned}
& \big|u_{\textbf{I}}( x ) - u_{\textbf{I}}( x_{,0} )\big| \\
&\quad = \Big| \frac{\big\| \nabla f(x_1)  \big\|^2 - \langle \nabla^2 f(x_1) x_2(t),x_2  \rangle}
{\langle \nabla f(x_1), -x_2 \rangle}   - \frac{\big\| \nabla f( x_{1,0})  \big\|^2 - \langle \nabla^2 f(x_{1,0}) x_{2,0},x_{2,0}  \rangle}
{\langle \nabla f(x_{1,0}), -x_{2,0}  \rangle}
\Big|\\
&\quad = \Big| \frac{\big\| \nabla f(x_1)  \big\|^2 }
{\beta_{\textbf{I}} \big\| \nabla f(x_{1,0}) \big\|^2 e ^{-\alpha t}} -\frac{ \langle \nabla^2 f(x_1) x_2 ,x_2  \rangle}
{\beta_{\textbf{I}} \big\| \nabla f( x_{1,0}) \big\|^2 e ^{-\alpha t}}  - \frac{ \big\| \nabla f(x_{1,0})  \big\|^2 }
{\beta_{\textbf{I}} \big\| \nabla f( x_{1,0}) \big\|^2} 
+ \frac{ \langle \nabla^2 f(x_{1,0}) x_{2,0}, x_{2,0}  \rangle}
{\beta_{\textbf{I}} \big\| \nabla f( x_{1,0}) \big\|^2}
\Big| \\
&\quad \overset{\text{(i)}}{\leq} \frac{1}{\beta_{\textbf{I}} \big\| \nabla f( x_{1,0}) \big\|^2} \Big| e^{\alpha t} \big\| \nabla f( x_1)  \big\|^2 - \big\| \nabla f(x_{1,0})  \big\|^2 \Big| \\
& \qquad  + 
\frac{1}{\beta_{\textbf{I}} \big\| \nabla f(x_{1,0}) \big\|^2}   \Big| e^{\alpha t} \big\langle \nabla^2 f(x_1) x_2 , x_2  \big\rangle 
- \langle \nabla^2 f(x_{1,0}) x_{2,0}, x_{2,0}  \rangle \Big| \\
&\quad \overset{\text{(ii)}}{=} \frac{\beta_{\textbf{I}}}{ \| x_{2,0} \|^2} \Big| e^{\alpha t} \big\| \nabla f(x_1)  \big\|^2 - \big\| \nabla f(x_{1,0})  \big\|^2 \Big|   + 
\frac{\beta_{\textbf{I}}}{ \| x_{2,0} \|^2}   \Big| e^{\alpha t} \langle \ \nabla^2 f(x_1) x_2 ,x_2 \rangle  
- \langle \nabla^2 f(x_{1,0}) x_{2,0} , x_{2,0} \rangle \Big| ,
\end{aligned}
\end{equation*}
where we made use of the triangle inequality in the inequality (i) and the relation~(\ref{z5}) in the equality (ii), respectively. Based on the above discussion, we then conclude that
\begin{equation*}
\label{z9}
\begin{aligned}
& \big|u_{\textbf{I}}( x ) - u_{\textbf{I}}( x_{,0} )\big| \\
&  \overset{\text{(i)}}{\leq} \frac{\beta_{\textbf{I}}}{ \| x_{2,0} \|^2}  
\left( \frac{L_f}{\delta \beta_{\textbf{I}}}  e^{\alpha t} \big(\beta_{\textbf{I}} C e^{\delta t} + 1 \big) \big(e^{\delta t} - 1 \big)  +
\frac{1}{\beta^2_{\textbf{I}}} \big(e^{\alpha t} - 1 \big) \right)  \|  x_{2,0} \|^2\\
&\quad + \frac{\beta_{\textbf{I}}}{ \| x_{2,0} \|^2} \left(
\frac{ H_f }{\delta} e^{(\alpha+2\delta) t}  \|  x_{2,0} \| \cdot \big(e^{\delta t} - 1 \big)  + \mathcal{L}_f  \big( e^{(\alpha+\delta) t} + e^{\alpha t} \big) \big(e^{\delta t} - 1 \big)
+ \mathcal{L}_f \big(e^{\alpha t} -1 \big) \right)  \| x_{2,0}  \|^2 \\
&  \overset{\text{(ii)}}{\leq} 
 \frac{L_f}{\delta}  e^{\delta t} (\beta_{\textbf{I}} C e^{\delta t} + 1 ) (e^{\delta t} - 1 ) +
\frac{1}{\beta_{\textbf{I}}} (e^{\delta t} - 1) 
\\
&\quad + \beta_{\textbf{I}} \Big(
 \beta_{\textbf{I}} H_f \delta^{-1} \cdot e^{3 \delta t}  \big\|\nabla f(x_{1,0})\big\| \cdot \big(e^{\delta t} - 1 \big)      + \mathcal{L}_f  \big(e^{2 \delta t} + e^{\delta t} \big) \big(e^{\delta t} - 1 \big)  + \mathcal{L}_f \big(e^{\delta t} -1 \big) \Big) \\
&  = \Big(  L_f \delta^{-1} \cdot  e^{\delta t} (\beta_{\textbf{I}} C e^{\delta t} + 1 ) + \frac{1}{\beta_{\textbf{I}}} +
\frac{\beta_{\textbf{I}}^2 H_f }{\delta} e^{3 \delta t}  \big\|\nabla f(x_{1,0})\big\|    + \beta_{\textbf{I}} \mathcal{L}_f  (e^{2 \delta t} + e^{\delta t})
+ \beta_{\textbf{I}} \mathcal{L}_f   \Big) \big( e^{\delta t} - 1 \big)\\
& =: \phi\Big(t,\big\|\nabla f(x_{1,0})\big\|\Big),
\end{aligned}
\end{equation*}
where the inequality (i) follows from the implications of Steps 2 and 3, and the equality (ii) is an immediate consequence of the relation $\alpha < \delta$ and the equality $x_{2,0}=-\beta_{\textbf{I}} \nabla f( x_{1,0})$.

\textbf{Step 5:} Consider $a_1$ defined in (\ref{z_t2_00}) 
and recall that $u_{\textbf{I}} ( x_{,0})$ by design lies inside the input interval $[\ul{u}_{\textbf{I}} , \ol{u}_{\textbf{I}}]$. The quantity $a_1$ is a lower bound on the distance of $u_{\textbf{I}}( x_{,0})$ to the boundaries of the interval $[\ul{u}_{\textbf{I}} , \ol{u}_{\textbf{I}}]$. Thus, the inter-jump interval $\tau_{\textbf{I}}$ satisfies
\begin{align*}
\tau_{\textbf{I}}  \geq \max \left\{t\geq 0 :~ \big|u_{\textbf{I}}( x ) - u_{\textbf{I}}( x_{,0} )\big| \leq a_1  \right\}
 \geq \max \left\{t\geq 0 :~ \phi\Big(t,\big\|\nabla f(x_{1,0})\big\|\Big) \leq a_1  \right\},
\end{align*} 
where the second inequality is implied by the analysis provided in Step 4. Consider a positive constant $r>1$. One can infer for every $t\in \big[0, \delta^{-1}{\log r}\big]$ that
\begin{align*}
 \phi\Big(t,\big\|\nabla f(x_{1,0})\big\|\Big) 
& \leq \Big(  r L_f \delta^{-1}   (r \beta_{\textbf{I}} C  + 1 ) + \beta_{\textbf{I}}^{-1}   
+ r^3 \beta_{\textbf{I}}^2 H_f \delta^{-1}  \big\|\nabla f(x_{1,0})\big\| \\
  &\quad
  + (r^2+r) \beta_{\textbf{I}} \mathcal{L}_f + \beta_{\textbf{I}} \mathcal{L}_f   \Big) (e^{\delta t} - 1)\\
&= \Big(a_2 + a_3 \big\|\nabla f(x_{1,0})\big\| \Big)(e^{\delta t} - 1) \\
& =: \phi'\Big(t,\big\|\nabla f(x_{1,0})\big\|\Big),
\end{align*}
where the constants $a_2$ and $a_3$ are defined in \eqref{z_t2_01}, \eqref{z_t2_02}, respectively, and the inequality $e^{\delta t} < r$ is used. Suppose now $\tau'$ is the lower bound of the inter jump in \eqref{z_t2_03}. Then $\phi'\Big(\tau',\big\|\nabla f(x_{1,0})\big\|\Big)=a_1$,  where the constant $a_1$ is defined in \eqref{z_t2_00}. It is straightforward to establish the assertion made in \eqref{z_t2_03}.

In the second part of the assertion, we should show that the proposed lower bound in \eqref{z_t2_03} is uniformly away from zero along any trajectories of the hybrid system. To this end, we only need to focus on the term $\|\nabla f\big(x_{1}(t)\big)\|$. Recall that Theorem~\ref{theo_1b} effectively implies that $\lim_{t\rightarrow \infty}~\|\nabla f\big(x_{1}(t)\big)\| = 0$, possibly not in a monotone manner though. This observation allows us to deduce that $M:= \sup_{t \ge 0}\|\nabla f\big(x_{1}(t)\big)\| < \infty$. Using the uniform bound $M$, we have a minimum non-zero inter-jump interval, giving rise to a Zeno-free behavior for all solution trajectories. 

\subsection{Proof of Theorem~\ref{theo_step_conv}} 

The proof follows a similar idea as in \cite[Theorem~3.1]{armanICML} but the required technical steps are somewhat different, leading to another set of technical assumptions. In the first step, we begin with describing on how the chosen input $u_{\textbf{II}}(x)$ in \eqref{step_02} ensures achieving the desired exponential convergence rate $\mathcal{O}\big(e^{-\alpha t}\big)$. 
Let us define the set $\mathcal{E}_{\alpha}:=\Big\{x\in \mathbb{R}^{2n}: \alpha \big(f(x_1)-f^*\big) < \langle \nabla f(x_1), -x_2  \rangle \Big\}$. 
We demonstrate that as long as a solution trajectory of the continuous flow~\eqref{step_01} is contained in the set~$\mathcal{E}_{\alpha}$, the function $f$  obeys the exponential decay~\eqref{eqt_8b}. To this end, observe that if $\big(x_1(t),x_2(t)\big) \in \mathcal{E}_{\alpha}$,
\begin{align*}
	\frac{d}{dt}\Big(f\big(x_1(t)\big)-f^*\Big) = \big\langle \nabla f\big(x_1(t)\big),x_2(t) \big\rangle  \le -\alpha \big(f(x_1)-f^*\big).
\end{align*}
The direct application of Gronwall's inequality, see \cite[Lemma~A.1]{khalil1996noninear}, to the above inequality yields the desired convergence claim~\eqref{eqt_8b}. 
Hence, it remains to guarantee that the solution trajectory renders the set $\mathcal{E}_{\alpha}$ invariant.
Let us define the quantity
\begin{align*}
	\sigma(t) := \langle \nabla f\big(x_1(t)\big),x_2(t) \rangle +\alpha\Big(f\big(x_1(t)\big)-f^*\Big).
\end{align*}
By construction, if $\sigma(t) < 0$, it follows that $\big(x_1(t),x_2(t)\big) \in \mathcal{E}_{\alpha}$. As a result, if we synthesize the feedback input $u_{\textbf{II}}(x)$ such that $\dot\sigma(t) \le 0$ along the solution trajectory of \eqref{step_01}, the value of $\sigma(t)$ does not increase, and as such 
\begin{align*}
	\big(x_1(t),x_2(t)\big) \in \mathcal{E}_{\alpha}, ~\forall t \ge 0 ~ \Longleftrightarrow ~ \big(x_1(0),x_2(0)\big) \in \mathcal{E}_{\alpha}.
\end{align*}
To ensure non-positivity property of $\dot{\sigma}(t)$, note that we have
\begin{align*}
\dot{\sigma}(x)
&=  \langle \nabla^2 f(x_1) x_2, x_2  \rangle + \langle \nabla f(x_1), \dot{x}_2 \rangle +\alpha \langle  \nabla f(x_1), x_2 \rangle \\
 & =  \langle \nabla^2 f(x_1) x_2, x_2  \rangle + \langle \nabla f(x_1), - x_2 - u_{\textbf{II}} ( x ) \nabla f(x_1) \rangle  + \alpha \langle  \nabla f(x_1), x_2 \rangle \\
 &  =  \langle \nabla^2 f(x_1) x_2, x_2  \rangle + \langle \nabla f(x_1), - x_2 \rangle - u_{\textbf{II}} ( x ) \| \nabla f(x_1) \|^2  - \alpha \langle  \nabla f(x_1), - x_2 \rangle \\
 &    =  \langle \nabla^2 f(x_1) x_2, x_2  \rangle + ( 1- \alpha) \langle \nabla f(x_1), - x_2 \rangle  - u_{\textbf{II}} ( x ) \| \nabla f(x_1) \|^2 =  0,
\end{align*}
where the last equality follows from the definition of the proposed control law~\eqref{step_02}. It is worth noting that one can simply replace the information of the Hessian $\nabla^2 f\big(x_1(t)\big)$ with the upper bound $L_f$ and still arrive at the desired inequality, see also Remark~\ref{rem_40} with regards to the 1st-order information oracle. 
Up to now, we showed that the structure of the control feedback guarantees the $\alpha$-exponential convergence. 
It remains then to ensure that $x(0) \in \mathcal{E}_{\alpha}$.
Consider the initial state $x_2(0) =-\beta_{\text{II}} \nabla f\big(x_1(0)\big)$. Notice that
\begin{align*}
 \alpha \Big(f\big(x_1(0)\big)-f^*\Big) & \leq \frac{\alpha}{2 \mu_f} \big\| \nabla f\big(x_1(0)\big) \big\|^2 
  = \frac{\alpha}{2 \mu_f \beta_{\textbf{II}}} \langle -x_2(0), \nabla f\big(x_1(0)\big)  \rangle  
  \leq \langle \nabla f\big(x_1(0)\big) , -x_2(0)   \rangle,
\end{align*}
where in the first inequality we use the gradient-dominated assumption~\eqref{d_1}, and in the second inequality the condition~\eqref{step_05_3} is employed. 
Suppose $\big(x_1^{\top}(0),x^{\top}_2(0)\big)^{\top}$ as the jump state $x^+$. It is evident that the range space of the jump map~\eqref{step_04} lies inside the set~$\mathcal{E}_\alpha$. 
At last, it is required to show that the jump policy is well-defined in the sense that the trajectory lands in the interior of the flow set $\mathcal{C}_{\textbf{I}}$~\eqref{step_03}, i.e., the control values also belong to the admissible set $[\ul{u}_{\textbf{II}},\ol{u}_{\textbf{II}}]$.  
To this end, we only need to take into account the initial control value since the switching law is continuous in the states and serves the purpose by design. 
Suppose that $x^+ \in \mathcal{C}_{\textbf{II}}$, we then have the sufficient requirements
\begin{multline*}
\ul{u}_{\textbf{II}} < \frac{-\ell_f\beta_{\text{II}}^2 \|\nabla f(x_1^+) \|^2+ (1-\alpha) \beta_{\text{II}} \| \nabla f(x_1^+) \|^2 }{ \| \nabla f(x_1^+) \|^2 }
\\ \le u_{\textbf{II}}(x^+) \leq \\
\frac{L_f\beta_{\text{II}}^2 \|\nabla f(x_1^+) \|^2+ (1-\alpha) \beta_{\text{II}} \| \nabla f(x_1^+) \|^2 }{\| \nabla f(x_1^+) \|^2 } < \ol{u}_{\textbf{II}},
\end{multline*}
where the relations \eqref{step_02} and \eqref{p2} are considered. Factoring out the term $\| \nabla f(x_1^+) \|^2$ leads to the sufficiency requirements given in \eqref{step_05_2} and \eqref{step_05_1}. Hence, the claim of Theorem~\ref{theo_step_conv} follows.

\subsection{Proof of Theorem~\ref{theo_step_zeno}} 
In order to facilitate the argument regarding the proof of Theorem~\ref{theo_step_zeno}, we begin with providing a lemma describing the norm-2 behaviors of $\langle \nabla f(x_1), - x_2 \rangle$, $x_2$, and $\nabla f(x_1)$. For the sake of brevity, we employ the same notations used in Subsection~\ref{subsec:pro_zeno1}, as well.

\begin{Lem}[Growth bounds]
\label{lem_step1}
Consider the continuous-time hybrid control system (\ref{p1}) with the respective parameters (\ref{sH_step}) satisfying (\ref{step_05}) where the function $f$ satisfies Assumptions~(\ref{p2}) and (\ref{d_1}). Suppose the hybrid control system is initiated from $\big(x_{1,0},\beta_{\textbf{II}} \nabla f(x_{1,0})\big)$ for some $x_{1,0}\in \mathbb{R}^n$. Then,
\begin{subequations}
\label{step_1}
\begin{align}
& \langle \nabla f(x_1), - x_2 \rangle =\beta_{\textbf{II}} e^{-\alpha t} \| \nabla f(x_{1,0})\|^2,  \label{step_1_0} \\
& \| x_2 \| \leq D(t) \| \nabla f ( x_{1,0}) \|, \label{step_1_1}\\
& \ul{\eta}(t) \| \nabla f ( x_{1,0}) \| \leq \| \nabla f ( x_{1}) \| \leq \ol{\eta}(t) \| \nabla f ( x_{1,0}) \|, \label{step_1_2}
\end{align}
\end{subequations}
with the time-varying scalars $D$, $\ul{\eta}$, and $\ol{\eta}$ given by
\begin{subequations}
\label{step_2}
\begin{align}
& D(t):= \Big( \beta_{\textbf{II}} ^2 e^{-2t} +\beta_{\textbf{II}} U \big( 1 - e^{-2t}  \big)  \Big)^{\frac{1}{2}}, \label{step_2_1}\\
& \ul{\eta}(t) :=  1 - \mathcal{L}_f (\beta_{\textbf{II}}^2+\beta_{\textbf{II}} U)^{\frac{1}{2}} t , \label{step_2_2}\\
& \ol{\eta}(t) :=  1 + \mathcal{L}_f (\beta_{\textbf{II}}^2+\beta_{\textbf{II}} U)^{\frac{1}{2}} t , \label{step_2_3}
\end{align}
\end{subequations}
respectively, where $U:=\max \{\ol{u}_{\textbf{II}}, -\ul{u}_{\textbf{II}}\}$ and $\mathcal{L}_f:= \max \{ \ell_f, L_f \}$.
\end{Lem}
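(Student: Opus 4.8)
The plan is to establish the three bounds in turn, exploiting that along the continuous flow the control value is admissible, i.e.\ $u_{\textbf{II}}(x)\in[\ul{u}_{\textbf{II}},\ol{u}_{\textbf{II}}]$ so that $u_{\textbf{II}}(x)\le U$, and that the post-jump initialization fixes $x_{2,0}=-\beta_{\textbf{II}}\nabla f(x_{1,0})$. For \eqref{step_1_0} I would set $g(t):=\langle \nabla f(x_1),-x_2\rangle$ and differentiate along \eqref{step_01}, exactly as in Step~1 of the proof of Theorem~\ref{theo_zeno}. Substituting $\dot{x}_1=x_2$ and $\dot{x}_2=-x_2-u_{\textbf{II}}(x)\nabla f(x_1)$, and then eliminating the Hessian term via the defining identity $u_{\textbf{II}}(x)\|\nabla f(x_1)\|^2=\langle\nabla^2 f(x_1)x_2,x_2\rangle+(1-\alpha)\langle\nabla f(x_1),-x_2\rangle$, collapses $\dot g$ to $-g+(1-\alpha)g=-\alpha g$. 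Integrating this linear ODE with $g(0)=\beta_{\textbf{II}}\|\nabla f(x_{1,0})\|^2$ yields \eqref{step_1_0}; as a by-product $g(t)>0$ for all $t$.

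For \eqref{step_1_1} I would work with $v(t):=\|x_2(t)\|^2$ and compute $\dot v=2\langle x_2,\dot x_2\rangle=-2\|x_2\|^2-2u_{\textbf{II}}(x)\langle x_2,\nabla f(x_1)\rangle=-2v+2u_{\textbf{II}}(x)\,g(t)$. On the flow set $u_{\textbf{II}}(x)\le U$, and since $g(t)=\beta_{\textbf{II}}e^{-\alpha t}\|\nabla f(x_{1,0})\|^2\ge 0$ with $e^{-\alpha t}\le 1$, this gives the differential inequality $\dot v\le -2v+2U\beta_{\textbf{II}}\|\nabla f(x_{1,0})\|^2$. The comparison lemma then bounds $v$ by the solution of the associated linear ODE with the same initial value $v(0)=\beta_{\textbf{II}}^2\|\nabla f(x_{1,0})\|^2$; solving that ODE (constant forcing, contraction rate $2$) produces precisely $\big(\beta_{\textbf{II}}^2 e^{-2t}+\beta_{\textbf{II}}U(1-e^{-2t})\big)\|\nabla f(x_{1,0})\|^2=D(t)^2\|\nabla f(x_{1,0})\|^2$, and taking square roots delivers \eqref{step_1_1}.

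The bound \eqref{step_1_2} I would obtain by controlling the time-variation of $\|\nabla f(x_1)\|$: since $\tfrac{d}{dt}\nabla f(x_1)=\nabla^2 f(x_1)x_2$, the differentiation-of-the-norm inequality gives $\big|\tfrac{d}{dt}\|\nabla f(x_1)\|\big|\le\|\nabla^2 f(x_1)x_2\|\le \mathcal{L}_f\|x_2\|$, using the uniform Hessian bound \eqref{p2} in the form $\|\nabla^2 f(x_1)\|\le\mathcal{L}_f$. Feeding in \eqref{step_1_1} together with the elementary estimate $D(t)\le(\beta_{\textbf{II}}^2+\beta_{\textbf{II}}U)^{1/2}$ (which holds because $D(t)^2$ is monotone in $t$ and bounded by $\max\{\beta_{\textbf{II}}^2,\beta_{\textbf{II}}U\}$) yields $\big|\tfrac{d}{dt}\|\nabla f(x_1)\|\big|\le\omega\|\nabla f(x_{1,0})\|$ with $\omega:=\mathcal{L}_f(\beta_{\textbf{II}}^2+\beta_{\textbf{II}}U)^{1/2}$. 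Integrating over $[0,t]$ and using $\|\nabla f(x_1(0))\|=\|\nabla f(x_{1,0})\|$ gives $\big|\,\|\nabla f(x_1)\|-\|\nabla f(x_{1,0})\|\,\big|\le\omega t\,\|\nabla f(x_{1,0})\|$, which rearranges into \eqref{step_1_2} with $\ul{\eta}=1-\omega t$ and $\ol{\eta}=1+\omega t$.

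The main obstacle I anticipate is \eqref{step_1_1}: the natural energy estimate for $\|x_2\|^2$ carries a forcing term $2u_{\textbf{II}}(x)g(t)$ whose sign and magnitude are not controlled a priori, and the decisive move is to combine admissibility ($u_{\textbf{II}}(x)\le U$) with the positivity and exponential decay of $g(t)$ already established in \eqref{step_1_0}, then to observe that the resulting comparison ODE integrates \emph{exactly} to the closed form $D(t)^2$. The remaining steps are routine, though care is needed with the sign convention of the initial velocity (the jump map \eqref{step_04} sets $x_{2,0}=-\beta_{\textbf{II}}\nabla f(x_{1,0})$) and with the fact that all estimates are valid only while the trajectory stays in the flow set $\mathcal{C}_{\textbf{II}}$, i.e.\ over a single inter-jump interval, which by time-invariance suffices for the subsequent Zeno analysis.
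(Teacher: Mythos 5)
Your proposal is correct and follows essentially the same route as the paper's proof: differentiating $\langle \nabla f(x_1),-x_2\rangle$ and using the defining identity of $u_{\textbf{II}}$ to collapse the dynamics to $\dot g=-\alpha g$, a Gronwall/comparison argument on $\|x_2\|^2$ with the forcing term bounded via $u_{\textbf{II}}\le U$ together with the positivity and decay of $g$, and an integrated bound on the variation of the gradient norm using $\|\nabla^2 f(x_1)\|\le \mathcal{L}_f$ and $D(t)\le (\beta_{\textbf{II}}^2+\beta_{\textbf{II}}U)^{1/2}$. The only cosmetic difference is in the third step, where the paper works with $\|\nabla f(x_1)\|^2$ and separates variables ($dg/\sqrt{g}$), while you differentiate the norm directly---the two are equivalent.
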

\begin{proof}
Considering the flow dynamics~\eqref{step_01} and the feedback input~\eqref{step_02}, one obtains
\begin{align*}
 \frac{d}{dt}\langle \nabla f(x_1), - x_2 \rangle 
&= \langle \nabla^2 f(x_1)x_2, -x_2 \rangle + \langle \nabla f(x_1),-\dot{x}_2 \rangle \\
& = \langle \nabla^2 f(x_1)x_2, -x_2 \rangle + \langle \nabla f(x_1),x_2 +u_{\textbf{II}}(x) \nabla f(x_1) \rangle \\
& = \langle \nabla^2 f(x_1)x_2, -x_2 \rangle + \langle \nabla f(x_1),x_2 \rangle +u_{\textbf{II}}(x) \|\nabla f(x_1) \|^2 \\
& = \langle \nabla^2 f(x_1)x_2, -x_2 \rangle + \langle \nabla f(x_1),x_2 \rangle  +\langle \nabla^2 f(x_1)x_2, x_2 \rangle - (1-\alpha) \langle \nabla f(x_1),x_2 \rangle \\
&=-\alpha \langle \nabla f(x_1), - x_2 \rangle,
\end{align*}
and as a result given the initial state $\big(x_{1,0}, -\beta_{\textbf{II}} \nabla f(x_{1,0})\big)$, the equality given in \eqref{step_1_0} is valid. We next turn to establish that \eqref{step_1_1} holds. Let us define $h(t)=\| x_2 \|^2$. Hence,
\begin{align*}
\frac{d}{dt}h(t) & \overset{\text{(i)}}{=} 2 \langle x_2, -x_2 -u_{\textbf{II}} (x)\nabla f(x_1) \rangle = -2\|x_2 \|^2 + 2 u_{\textbf{II}} (x) \langle \nabla f(x_1),-x_2  \rangle\\
&\overset{\text{(ii)}}{=} -2 h(t) + 2 u_{\textbf{II}} (x) \beta_{\textbf{II}} e^{-\alpha t} \| \nabla f(x_{1,0}) \|^2 \leq  -2 h(t) + 2 U \beta_{\textbf{II}} \| \nabla f(x_{1,0}) \|^2,
\end{align*}
where we made use of the flow dynamics~\eqref{step_01} in the inequality (i) and the equation~\eqref{step_1_0} in the equality (ii). We then use the Gronwall's inequality to infer that 
\begin{align*}
\|x_2\|^2 
& \leq e^{-2t} \| x_{2,0} \|^2 + \int_0^t e^{-2(t-\tau)}2U\beta_{\textbf{II}} \big\| \nabla f(x_{1,0}) \big\|^2 d \tau\\
& = e^{-2t} \beta_{\textbf{II}}^2 \big\| \nabla f(x_{1,0}) \big\|^2 +e^{-2t}2U\beta_{\textbf{II}} \big\| \nabla f(x_{1,0}) \big\|^2 \int_0^t e^{2\tau} d \tau\\
& = e^{-2t}  \big\| \nabla f(x_{1,0}) \big\|^2 \Big( \beta_{\textbf{II}}^2 e^{-2t} +\beta_{\textbf{II}} U \big( 1 - e^{-2t} \big) \Big)\\
& =: D^2(t) \big\| \nabla f(x_{1,0}) \big\|^2,
\end{align*}
where $D(t)$ is defined in \eqref{step_2_1}. As a result, the claim in \eqref{step_1_1} holds. The argument to show the last claim in Lemma~\ref{lem_step1} is discussed now. Let us define $g(t):=\big\| \nabla f(x_1) \big\|^2$. Observe that
\begin{align*}
\frac{d}{dt} g(t) = 2 \langle  \nabla^2 f(x_1) x_2 , \nabla f(x_1) \rangle,
\end{align*}
and as a result 
\begin{align*}
\left| \frac{d}{dt} g(t) \right| 
\overset{\text{(i)}}{\leq} 2 \mathcal{L}_f \| x_2 \| \cdot \big\| \nabla f(x_1) \big\| 
= 2 \mathcal{L}_f \| x_2 \|  \sqrt{g(t)} 
\overset{\text{(ii)}}{\leq} 
 2 \mathcal{L}_f D(t) \big\| \nabla f(x_{1,0}) \big\| \sqrt{g(t)} ,
\end{align*}
where the inequalities (i) and (ii) are implied by Assumption~\eqref{p2} and the inequality~\eqref{step_1_1}, respectively. Hence, we deduce that
\begin{align*}
\frac{d}{dt} g(t) \geq - 2 \mathcal{L}_f D(t) \big\| \nabla f(x_{1,0}) \big\| \sqrt{g(t)} ,
\end{align*}
and as a consequence
\begin{align*}
\frac{d  g(t)}{\sqrt{g(t)}} \geq - 2 \mathcal{L}_f D(t) \big\| \nabla f(x_{1,0}) \big\| dt.
\end{align*}
Integrating the two sides of the above inequality results in   
\begin{align*}
\sqrt{g(t)}  -\sqrt{g(0)} 
& \geq  - \mathcal{L}_f  \big\| \nabla f(x_{1,0}) \big\| \int_0^t D(\tau) d\tau\\
& = - \mathcal{L}_f  \big\| \nabla f(x_{1,0}) \big\| \int_0^t \Big( \beta_{\textbf{II}} ^2 e^{-2\tau} +\beta_{\textbf{II}} U \big( 1 - e^{-2\tau}  \big)  \Big)^{\frac{1}{2}} d\tau\\
& \geq - \mathcal{L}_f  \big\| \nabla f(x_{1,0}) \big\| \int_0^t \big( \beta_{\textbf{II}} ^2 +\beta_{\textbf{II}} U   \big)^{\frac{1}{2}} d\tau\\
& = - \mathcal{L}_f  \big\| \nabla f(x_{1,0}) \big\| \big( \beta_{\textbf{II}} ^2 +\beta_{\textbf{II}} U   \big)^{\frac{1}{2}} t.
\end{align*}
Based on the above analysis and the definition of $g(t)$, it follows that 
\begin{align*}
\big\| \nabla f(x_1) \big\| \geq \ul{\eta}(t) \big\| \nabla f(x_{1,0}) \big\|,
\end{align*}
where $\ul{\eta}(t)$ is given in \eqref{step_2_2}. Proceeding with a similar approach to the one presented above, one can use the inequality 
\begin{align*}
\frac{d}{dt} g(t) \leq 2 \mathcal{L}_f D(t) \big\| \nabla f(x_{1,0}) \big\| \sqrt{g(t)} ,
\end{align*}
and infer that
\begin{align*}
\big\| \nabla f(x_1) \big\| \leq \ol{\eta}(t) \big\| \nabla f(x_{1,0}) \big\|,
\end{align*}
where $\ol{\eta}(t)$ is defined in \eqref{step_2_3}. Thus, the last claim in Lemma~\ref{lem_step1} also holds.
\end{proof}

\textbf{Proof of Theorem~\ref{theo_step_zeno}:}
We are now in a position to formally state the proof of Theorem~\ref{theo_step_zeno}. Consider the parameter $\delta$ as defined in Theorem~\ref{theo_step_zeno}. Intuitively, this quantity represents a lower bound on the distance of $u_{\textbf{II}} (0)$ from the endpoints of the flow set interval. Thus, one can obtain a lower bound on the inter-jump interval $\tau_{\textbf{II}}$ as follows
\begin{align}
\label{step_4_0}
\tau_{\textbf{II}} \geq \sup~\{t>0:~|u_{\textbf{II}}(t)-u_{\textbf{II}}(0)|\leq \delta \}.
\end{align}
On the other hand, given the structure of $u_{\textbf{II}}$ in \eqref{step_02},
\begin{align*}
-\frac{\ell_f \| x_2 \|^2}{\| \nabla f(x_1) \|^2} + (1-\alpha) \frac{\beta_{\textbf{II}} e^{-\alpha t} \| \nabla f(x_{1,0}) \|^2}{\| \nabla f(x_1) \|^2} 
\leq u_{\textbf{II}}(t) \leq 
\frac{L_f \| x_2 \|^2}{\| \nabla f(x_1) \|^2} + (1-\alpha) \frac{\beta_{\textbf{II}} e^{-\alpha t} \| \nabla f(x_{1,0}) \|^2}{\| \nabla f(x_1) \|^2},
\end{align*}
since the function $f$ satisfies Assumption \eqref{p2}. 
In light of Lemma~\ref{lem_step1} and considering the above relation, one can infer that for $\alpha\leq 1$, we name Case(i),
\begin{subequations}
\label{step_5}
\begin{align}
\ul{e}(t):=-\frac{\ell_f D(t)^2}{\ul{\eta}(t)^2} + (1-\alpha) \frac{\beta_{\textbf{II}} e^{-\alpha t} }{\ol{\eta}(t)^2} 
\leq u_{\textbf{II}}(t) \leq 
\frac{L_f D(t)^2}{\ul{\eta}(t)^2} + (1-\alpha) \frac{\beta_{\textbf{II}} e^{-\alpha t} }{\ul{\eta}(t)^2} =: \ol{e}(t), \label{step_5_1}
\end{align}
and that for $\alpha >1$, we denote by Case (ii),
\begin{align}
\ul{p}(t):=-\frac{\ell_f D(t)^2}{\ul{\eta}(t)^2} + (1-\alpha) \frac{\beta_{\textbf{II}} e^{-\alpha t} }{\ul{\eta}(t)^2} 
\leq u_{\textbf{II}}(t) \leq 
\frac{L_f D(t)^2}{\ul{\eta}(t)^2} + (1-\alpha) \frac{\beta_{\textbf{II}} e^{-\alpha t} }{\ol{\eta}(t)^2} =: \ol{p}(t). \label{step_5_2}
\end{align}
\end{subequations}
According to the above discussion, we employ \eqref{step_5} to obtain a lower bound $\tau_{\textbf{II}}$ instead of using \eqref{step_4_0}. Consider a time instant $t_\circ$ such that $t_\circ < 1/ \omega$ where $\omega$ is defined in Theorem~\ref{theo_step_zeno}.

\textbf{Case (i) ($\alpha \leq 1$):} 
Let us denote $\sup_{t\in [0,t_\circ]}\dot{\ol{e}}(t)$ by $b_1$.
Observe that
\begin{align*}
 \dot{\ol{e}}(t)
&= \frac{2 L_f \beta_{\textbf{II}} e^{-2t} (-\beta_{\textbf{II}}+U )(1-\omega t)^2 + 2 \omega (1 - \omega t) L_f \beta_{\textbf{II}} \big( \beta_{\textbf{II}} e^{-2t} +U (1 - e^{-2t})  \big) }{(1-\omega t)^4}\\
& + (1-\alpha) \frac{-\alpha \beta_{\textbf{II}} e^{-\alpha t}(1-\omega t)^2 + 2 \omega (1 - \omega t) \beta_{\textbf{II}} e^{-2t}}{(1-\omega t)^4}\\
& \leq \frac{2 L_f \beta_{\textbf{II}} U e^{-2t} (1-\omega t)^2 + 
2 \omega (1 - \omega t) L_f \beta_{\textbf{II}} \big( \beta_{\textbf{II}} e^{-2t} +U (1 - e^{-2t})  \big)}{(1-\omega t)^4}\\
& + (1-\alpha) \frac{ 2 \omega (1 - \omega t) \beta_{\textbf{II}} e^{-2t}}{(1-\omega t)^4} \\
& \leq \frac{2 L_f \beta_{\textbf{II}} \big( U + \omega ( \beta_{\textbf{II}}  +U)   \big)}{(1-\omega t)^3}  + (1-\alpha) \frac{ 2 \omega \beta_{\textbf{II}} }{(1-\omega t)^3} \\
& \leq \frac{2 L_f \beta_{\textbf{II}} \big( U + \omega ( \beta_{\textbf{II}}  +U)   \big)}{(1-\omega t_\circ)^3}  + (1-\alpha) \frac{ 2 \omega \beta_{\textbf{II}} }{(1-\omega t_\circ)^3} =: b_1,
\end{align*}
considering \eqref{step_5_1}. Hence, $\ol{e}(t) \leq b_1 t + \ol{e}(0)$ and as a result
\begin{align}
\label{step_6}
\tau_{\textbf{II}} &\geq \max \{t\in (0,t_\circ]:~ b_1 t +\ol{e}(0) - \ol{e}(0) \leq \delta  \}
= \min\{ t_\circ, \delta/b_1 \},
\end{align}
by virtue of the fact that $b_1 t +\ol{e}(0)$ is a monotonically increasing function that upper bounds $u_{\textbf{II}}(t)$. 
Now, let us define $b_2:= \inf_{t\in (0,t_\circ]}\dot{\ul{e}}(t)$.
Notice that 
\begin{align*}
\dot{\ul{e}}(t) 
& = \frac{2 \ell_f \beta_{\textbf{II}} e^{-2t} (\beta_{\textbf{II}}-U )(1-\omega t)^2 -
2 \omega (1 - \omega t) \ell_f \beta_{\textbf{II}} \big( \beta_{\textbf{II}} e^{-2t} +U (1 - e^{-2t})  \big) }{(1-\omega t)^4} \\
& + (1-\alpha) \frac{-\alpha \beta_{\textbf{II}} e^{-\alpha t}(1+\omega t)^2 - 2 \omega (1 + \omega t) \beta_{\textbf{II}} e^{-2t}}{(1+\omega t)^4}\\
& \geq \frac{-2 \ell_f \beta_{\textbf{II}} e^{-2t} U (1-\omega t)^2  -
2 \omega (1 - \omega t) \ell_f \beta_{\textbf{II}} \big( \beta_{\textbf{II}} e^{-2t} +U (1 - e^{-2t})  \big) }{(1-\omega t)^4} \\
& - (1-\alpha) \frac{\alpha \beta_{\textbf{II}} e^{-\alpha t}(1+\omega t)^2 + 2 \omega (1 + \omega t) \beta_{\textbf{II}} e^{-2t}}{(1+\omega t)^4}\\
& \geq -\frac{2 \ell_f \beta_{\textbf{II}}  \big(U  + \omega ( \beta_{\textbf{II}} +U ) \big) }{(1-\omega t_\circ)^3}  - (1-\alpha) \frac{\alpha \beta_{\textbf{II}} (1+\omega t_\circ) + 2 \omega  \beta_{\textbf{II}} }{1} =: -b_2.
\end{align*}
 Thus, $\ul{e}(t)\geq -b_2 t + \ul{e}(0)$ and as a consequence
\begin{align}
\label{step_6}
\tau_{\textbf{II}} & \geq \max \{t\in (0,t_\circ]:~ \ul{e}(0) - \big(-b_2t + \ul{e}(0)\big)\leq \delta  \} 
= \min \{ t_\circ, \delta/b_2 \},
\end{align}
because the function $-b_2t + \ul{e}(0)$ is a monotonically decreasing function that lower bounds $u_{\textbf{II}}(t)$.

\textbf{Case (ii) ($\alpha > 1$):} Much of this case follows the same line of reasoning used in Case (i). We thus provide only main mathematical derivations and refer the reader to the previous case for the argumentation. Define $b_3:= \sup_{t\in(0,t_\circ]}\dot{\ol{p}}(t)$. One can deduce from \eqref{step_5_2} that
\begin{align*}
 \dot{\ol{p}}(t) 
& = \frac{2 L_f \beta_{\textbf{II}} e^{-2t} (-\beta_{\textbf{II}}+U )(1-\omega t)^2 +
2 \omega (1 - \omega t) L_f \beta_{\textbf{II}} \big( \beta_{\textbf{II}} e^{-2t} +U (1 - e^{-2t})  \big) }{(1-\omega t)^4}\\
& + (1-\alpha) \frac{-\alpha \beta_{\textbf{II}} e^{-\alpha t}(1+\omega t)^2 - 2 \omega (1 + \omega t) \beta_{\textbf{II}} e^{-2t}}{(1+\omega t)^4}\\
& \leq \frac{2 L_f \beta_{\textbf{II}} \big(  U +  \omega  (\beta_{\textbf{II}}  +U)   \big) }{(1-\omega t_\circ)^3}  + (\alpha - 1) \frac{\alpha \beta_{\textbf{II}} (1+\omega t_\circ) + 2 \omega  \beta_{\textbf{II}} }{1} =: b_3.
\end{align*}
Hence, $\ol{p}(t)\leq b_4 t + \ol{p}(0)$ and as a result
\begin{align}
\label{step_8}
\tau \geq \min \{ t_\circ, \delta/b_3 \}.
\end{align}
Finally, define $\dot{\ul{p}}(t):=\inf_{t\in(0,t_\circ]}\ul{p}(t)$ from which it follows that
\begin{align*}
\dot{\ul{p}}(t) 
& = \frac{2 \ell_f \beta_{\textbf{II}} e^{-2t} (\beta_{\textbf{II}}-U )(1-\omega t)^2 - 2 \omega (1 - \omega t) \ell_f \beta_{\textbf{II}} \big( \beta_{\textbf{II}} e^{-2t} +U (1 - e^{-2t})  \big) }{(1-\omega t)^4} \\
& + (1-\alpha) \frac{-\alpha \beta_{\textbf{II}} e^{-\alpha t}(1-\omega t)^2 + 2 \omega (1 - \omega t) \beta_{\textbf{II}} e^{-2t}}{(1-\omega t)^4}\\
& \geq -\frac{2 \ell_f \beta_{\textbf{II}} \big( U  + \omega  (\beta_{\textbf{II}} +U)  \big) }{(1-\omega t_\circ)^3}  - (\alpha-1) \frac{ 2 \omega  \beta_{\textbf{II}} }{(1-\omega t_\circ)^3} =: -b_4,
\end{align*}
considering \eqref{step_5_2}. Now, since $\ul{p}(t) \geq -b_4 t + \ul{p}(0)$, it is implied that 
\begin{align}
\label{step_9}
\tau_{\textbf{II}} \geq \min\{ t_\circ, \delta/b_4 \}.
\end{align}
Notice that based on the relations derived in \eqref{step_6}-\eqref{step_9},
\begin{align*}
\tau_{\textbf{II}} \geq 
 \min\Big\{t_\circ, \frac{2 \mathcal{L}_f \beta_{\textbf{II}} \big(  U +  \omega  (\beta_{\textbf{II}}  +U)   \big) }{(1-\omega t_\circ)^3} + |\alpha - 1| \frac{ 2 \omega  \beta_{\textbf{II}} }{(1-\omega t_\circ)^3} 
 + |\alpha - 1| \alpha \beta_{\textbf{II}} (1+\omega t_\circ)  \Big\}.
\end{align*}
Suppose now for some scalar $r\in (0,1)$, $t_\circ$ is chosen such that $t_\circ \leq \frac{r}{\omega}$. It is evident that
\begin{align*}
\tau_{\textbf{II}} \geq 
 \min\Big\{\frac{r}{\omega}, \delta\Big/\Big(\frac{2 \mathcal{L}_f \beta_{\textbf{II}} \big(  U +  \omega  (\beta_{\textbf{II}}  +U)   \big) }{(1-r)^3} + |\alpha - 1| \frac{ 2 \omega  \beta_{\textbf{II}} }{(1-r)^3} 
 + |\alpha - 1| \alpha \beta_{\textbf{II}} (1+r)\Big)  \Big\}.
\end{align*}
It turns out that the relation \eqref{step_3} in Theorem~\ref{theo_step_zeno} is valid  and this concludes the proof.

\subsection{Proof of Theorem~\ref{theo_2}}
\label{subsec:proof_2}
In what follows, we provide the proof for the structure~\textbf{II} and refer the interested reader to \cite[Theorem~3.7]{armanICML} for the structure~\textbf{I}. We emphasize that the technical steps to establish a stable discretization for both structures are similar.

According to the forward-Euler method, the velocity $\dot{x}_1$ and the acceleration $\dot{x}_2$ in the dynamics~\eqref{p1} with \eqref{sH_step} are discretized as follows:
\begin{subequations}
\begin{align}
\label{dx1}
\frac{x_1^{k+1}- x_1^k} {s} &=  x_2^k,\\
\frac{x_2^{k+1}-x_2^k}{s}   &= - u_{d,\textbf{II}}(x^k) \nabla f(x_1^{k}) - x_2^k,
\end{align}
\end{subequations}
where the discrete input $u_{d,\textbf{II}}(x^k)=u_{\textbf{II}}(x^k)$. Now, observe that the definition of the flow set $\mathcal{C}_{d, \textbf{II}}$ (\ref{d2_1}) implies
\begin{align*}
c_1 \| x^k_2 \|^2 \leq \| \nabla f(x^k_1)  \|^2 \leq c_2 \langle  \nabla f(x^k_1),-x^k_2 \rangle 
 \leq c_2 \| \nabla f(x_1^k) \| \cdot \| x_2^k\|,
\end{align*}
where the extra inequality follows from the Cauchy-Schwarz inequality ($\forall~ a,b\in\mathbb{R}^n$, $\langle a ,b\rangle\leq \| a \|\cdot \| b \|$). In order to guarantee that the flow set $\mathcal{C}_{d,\textbf{II}}$ is non-empty the relation (\ref{d4_d1}) should hold between the parameters $c_1$ and $c_2$ since $\sqrt{c_1}\leq \frac{\| \nabla f(x_1^k) \|}{\| x_2^k \|}\leq c_2$.
Next, suppose that the parameters $c_1$, $c_2$, and $\beta$ satisfy (\ref{d4_d2}). Multiplying (\ref{d4_d2}) by $\|  \nabla f(x_1^k)\|$, one can observe that the range space of the jump map $G_{d,\textbf{II}}(x^k)=\big((x^k)^\top,-\beta \nabla^\top f(x^k)\big)^\top$ is inside the flow set $\mathcal{C}_{d,\textbf{II}}$ (\ref{d2_1}). From the fact that the discrete dynamics (\ref{d1}) evolves respecting the flow set $\mathcal{C}_{d,\textbf{II}}$ defined in (\ref{d2_1}), we deduce
\begin{align*}
  f(x_1^{k+1}) -f(x_1^k)  
 &  \leq  \langle \nabla f(x_1^k), x_1^{k+1} - x_1^k \rangle + \frac{L_f}{2} \| x_1^{k+1} - x_1^k \|^2 \\
 &  \leq  -s \langle \nabla f(x_1^k), -x_2^{k} \rangle + \frac{L_f s^2}{2} \| x_2^k \|^2 \\
 &  <  - \frac{s}{c_2} \| \nabla f(x_1^k) \|^2 + \frac{L_f s^2}{2 c_1} \| \nabla f(x_1^k) \|^2 \\
 &  =  \big( - \frac{s}{c_2} + \frac{L_f}{2 c_1} s^2 \big) \| \nabla f(x_1^k) \|^2   \leq 2 \mu_f \big( - \frac{s}{c_2} + \frac{L_f}{2 c_1} s^2 \big) \big( f(x_1^k)-f^*  \big),
\end{align*}
where we made use of the relation (\ref{p2_g}), the definition (\ref{dx1}), the relation (\ref{d2_1}), and the assumption (\ref{d_1}), respectively. Then, considering the inequality implied by the first and last terms given above and adding $f(x_1^k)-f^*$ to both sides of the considered inequality, we arrive at
\begin{equation*}
f(x_1^{k+1})-f^*\leq \lambda(s,c_1,c_2,\beta)   \left( f(x_1^k)-f^*  \right)
\end{equation*}
where $\lambda(s,c_1,c_2,\beta)$ is given by (\ref{d_r}).  As a result, if the step size $s$ is chosen such that $s<\frac{2c_1}{c_2 L_f}$ then $\lambda(s,c_1,c_2,\beta) \in (0,1)$. The claim of Theorem~\ref{theo_2} follows.
\begin{figure}[t]
	\centering
	\subfigure[Objective value along system trajectories.]{\label{fig:obj_con}\includegraphics[width=70mm]{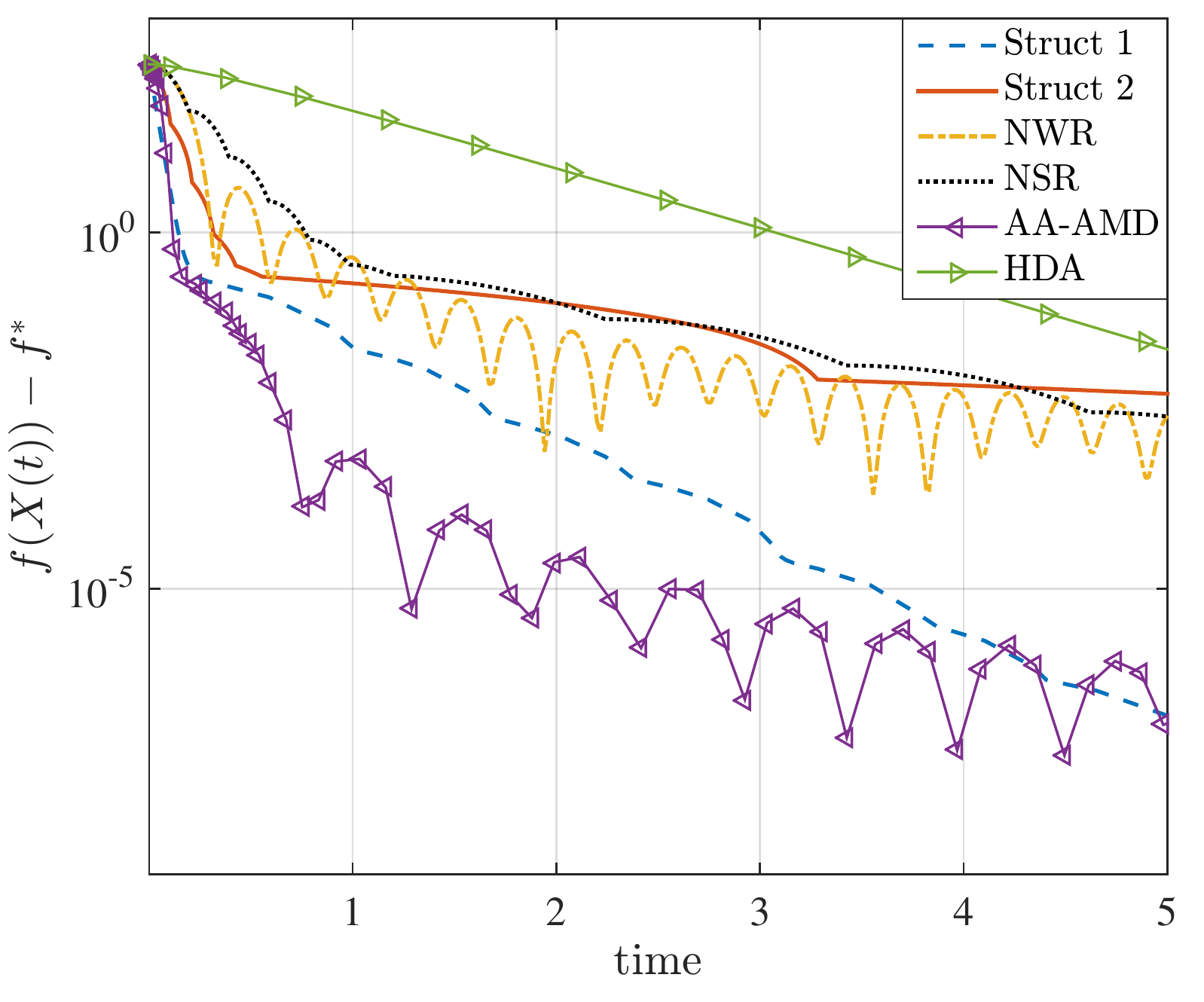}} 
	\qquad   
	\subfigure[State-dependent and time-varying coefficients.]{\label{fig:input_con}\includegraphics[width=70mm]{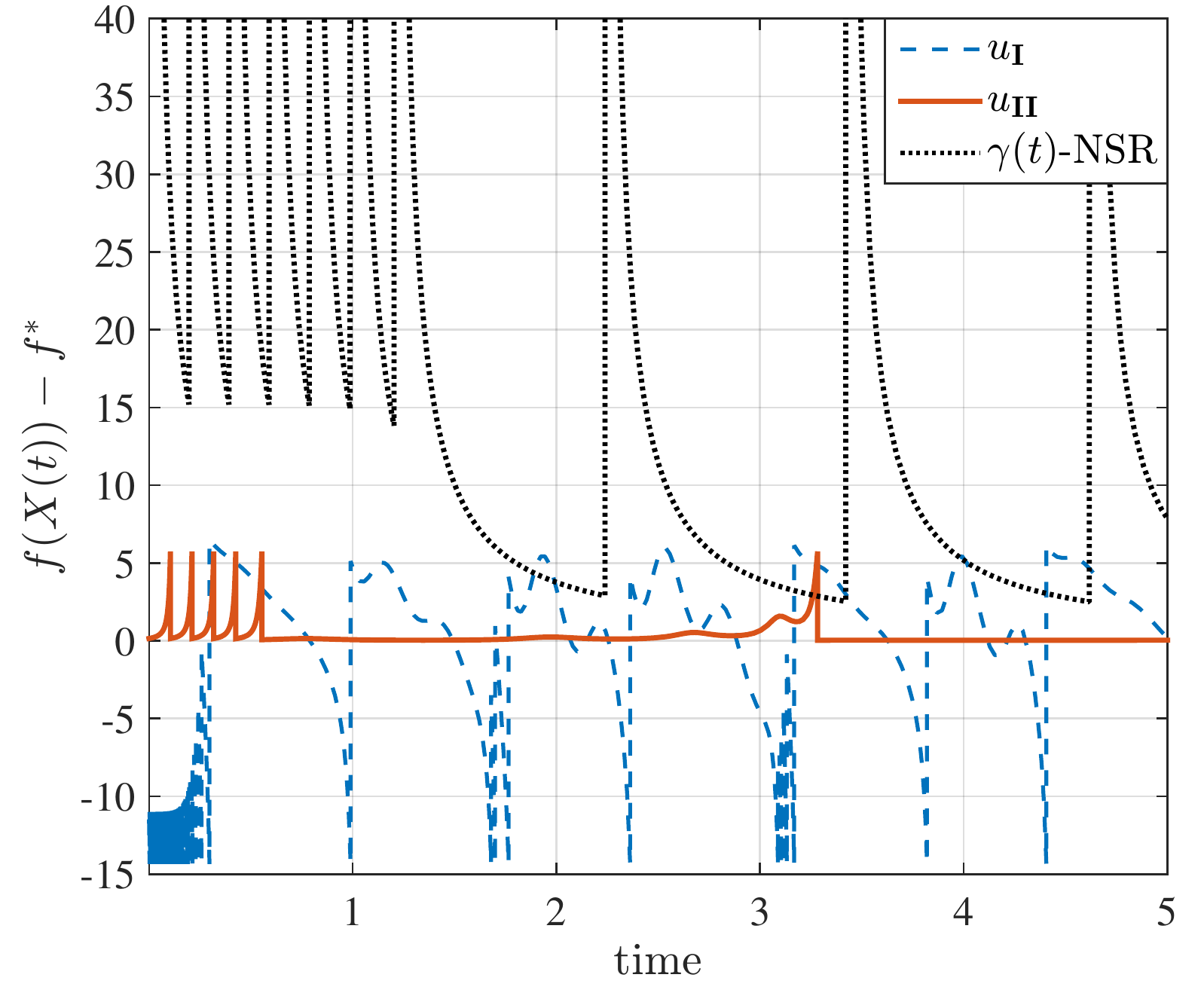}}
	\caption{Continuous-time dynamics of \textbf{Struct~I}, \textbf{Struct~II}, \textbf{NSR}.}
	\label{fig:con_result}
\end{figure}

\section{Numerical Examples}
\label{sec:examp}
In this section a numerical example illustrating the results in this paper is represented. The example is a least mean square error (LMSE) problem $f(X)=\|A X -b \|^2$ where $X\in\mathbb{R}^5$ denotes the decision variable, $A\in\mathbb{R}^{50\times 5}$ with $L_f=2\lambda_{\max}(A^\top A)=136.9832$ and $\mu_f=2\lambda_{\min}(A^\top A)=3.6878$, and $b\in\mathbb{R}^{50}$. Since the LMSE function is convex (in our case, this function is strongly convex), we take $\ell_f=0$. 
We begin with providing the results concerning the continuous-time case. Then, the discrete-time case's results are shown.  

\textbf{Continuous-time case:} In what follows, we compare the behaviors of the proposed structures \textbf{I} and \textbf{II} (denoted by \textbf{Struct I} and \textbf{Struct II}, respectively) 
with the following fast methods:
\begin{itemize}
\item (\textbf{NWR}): Nesterov's fast method~\eqref{dyn2} with $\gamma(t)=\frac{3}{t}$ and without any restarting scheme,
\item (\textbf{NSR}): Nesterov's fast method~\eqref{dyn2} with $\gamma(t)=\frac{3}{t}$  with the speed restarting scheme proposed in \cite[Section~5]{Su2016differential}, 
\item (\textbf{AA-AMD}): the adaptive averaging accelerated mirror descent method proposed in \cite[Section~2]{krichene2016adaptive} with the choice of parameters given in \cite[Example~1]{krichene2016adaptive}, $\beta=3$, and the adaptive heuristic $a(t)=\frac{3}{t}+\text{sign}\big(\max\big\{0,-\langle \nabla f(X(t)),\dot{X}(t)\rangle\big\}\big)\times\frac{1}{t^2}$,
\item (\textbf{HDA}): the Hessian driven accelerated method proposed in \cite{attouch2016fast} with $\alpha=3$ and $\beta=1$.
\end{itemize}
(The notations for some of the parameters involved in the above methods are identical, e.g., the parameter $\beta$ appears in both \textbf{AA-AMD} and \textbf{HDA}. Notice that these parameters are not necessarily the same. We refer the reader to consult with the cited references for more details.) 
We set the desired convergence rates $\alpha_{\textbf{I}}$ and $\alpha_{\textbf{II}}$ equal to each other. We then select  $\beta_{\textbf{I}}$ and $\beta_{\textbf{II}}$ such that the corresponding flow sets $[\ul{u}_{\textbf{I}}, \ol{u}_{\textbf{I}}]$ and $[\ul{u}_{\textbf{II}}, \ol{u}_{\textbf{II}}]$ are relatively close using Theorem~\ref{theo_1b} and Theorem~\ref{theo_step_conv}, respectively. The corresponding parameters of \textbf{Struct I} and \textbf{Struct II} are as follows: $\alpha_{\textbf{I}} = 0.2$, $\beta_{\textbf{I}} =   0.1356$, $ \ul{u}_{\textbf{I}} =   -14.352$, $\ol{u}_{\textbf{I}} =   15.1511$; $\alpha_{\textbf{II}} = 0.2$, $\beta_{\textbf{II}} =   0.0298$, $ \ul{u}_{\textbf{II}} =   -0.1861$, $\ol{u}_{\textbf{II}} =   5.7457$.
 
In Figure~\ref{fig:obj_con}, the behaviors of the suboptimality measure $f\big(X(t)\big)-f^*$ of the considered methods are depicted. The corresponding control inputs of \textbf{Struct I}, \textbf{Struct II}, and \textbf{NSR} are represented in Figure~\ref{fig:input_con}. 
With regards to \textbf{Struct~I}, observe that the length of inter-jump intervals is small during the early stages of simulation. As time progresses and the value of $\nabla f(X)$ decreases, the length of inter-jump intervals relatively increases (echoing the same message conveyed  in Theorem~\ref{theo_zeno}). Furthermore, in the case of \textbf{Struct I} where $u_{\textbf{I}}$ plays the role of damping, the input $u_{\textbf{I}}$ admits a negative range unlike most of the approaches in the literature.

\begin{figure}[t]
	\centering
	\subfigure[Standard tuning parameters.]{\label{fig:obj_disc_all}\includegraphics[width=70mm]{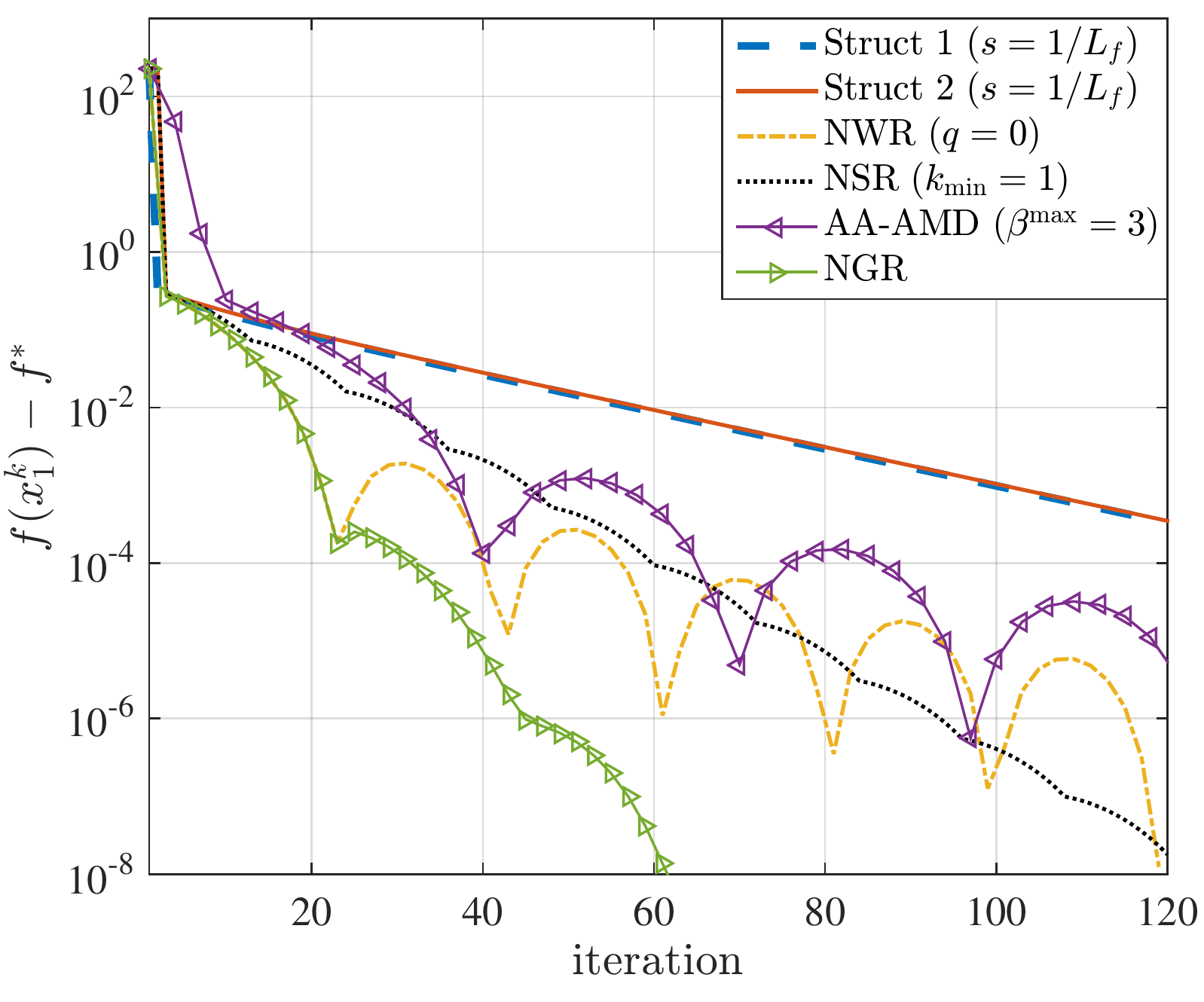}}    
	\qquad
	\subfigure[Example-based optimal tuning parameters.]{\label{fig:obj_disc_bests}\includegraphics[width=70mm]{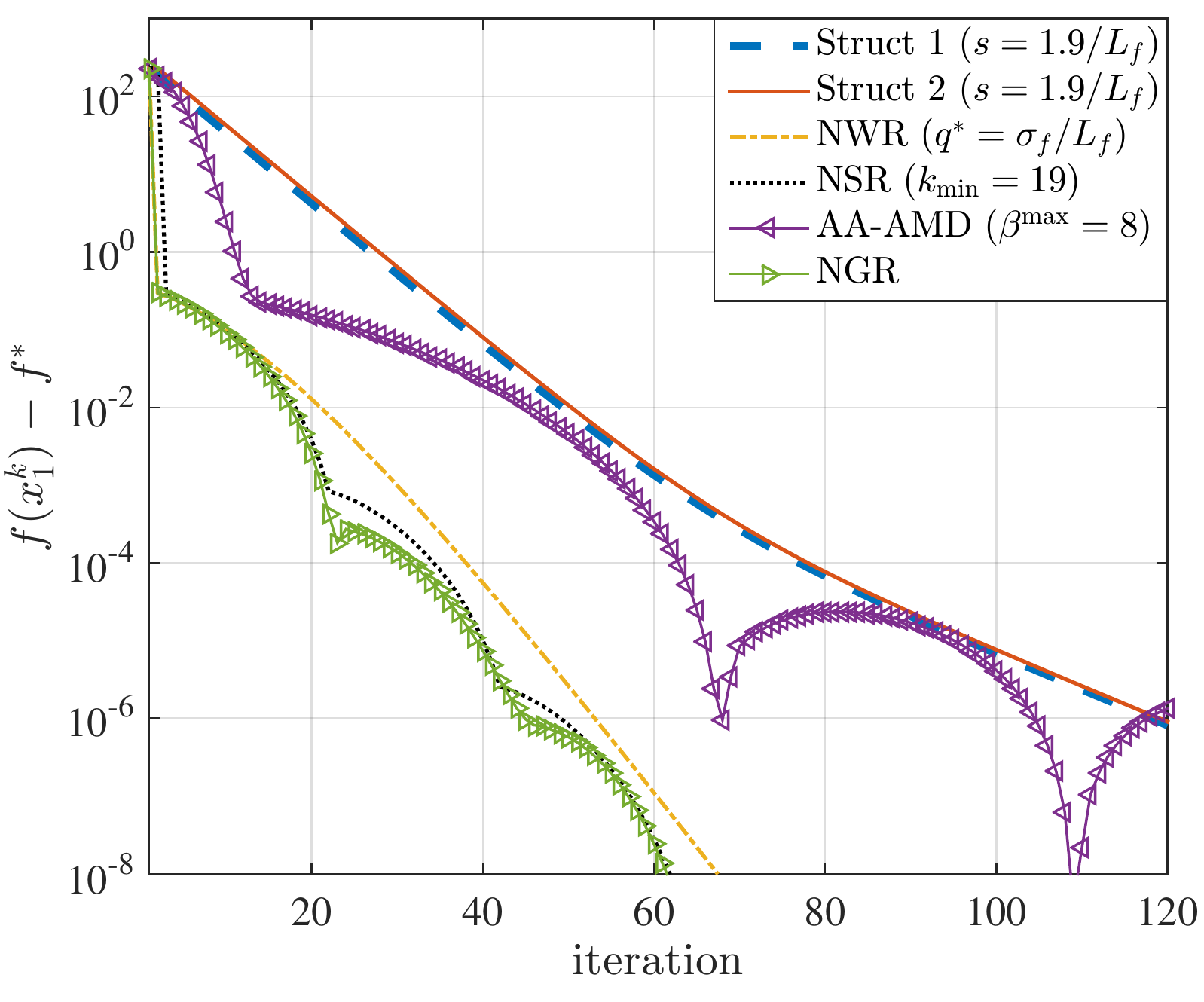}}
	\caption{Discrete-time dynamics of \textbf{Struct~I}, \textbf{Struct~II}, \textbf{NSR}.}
	\label{fig:disc_norm_best}
\end{figure}

\textbf{Discrete-time case:} The discrete-time case's results are now shown. We employ Algorithm~\ref{alg:example} for \textbf{Struct~I} and \textbf{Struct~II}.

In Figure~\ref{fig:obj_disc_all}, we compare these two structures with the discrete-time methods:
\begin{itemize}
\item (\textbf{NWR}): Algorithm~1 in \cite{o2015adaptive} with $q=0$ and $t_k=\frac{1}{L_f}$,
\item (\textbf{NSR}): Algorithm~1 in \cite{Su2016differential} with $k_{\min}=1$ and $s=\frac{1}{L_f}$, 
\item (\textbf{AA-AMD}): Algorithm~1 in the supplementary material of \cite{krichene2016adaptive} with $\beta=\beta^{\max}=3$, 
\item (\textbf{NGR}): Nesterov's method with the gradient restarting scheme proposed in \cite[Section~3.2]{o2015adaptive} with $q=0$ and $t_k=\frac{1}{L_f}$.
\end{itemize} 
It is evident that the discrete counterparts of our proposed structures perform poorly compared to these algorithms, reinforcing the assertion of Remark~\ref{rem_5} calling for a smarter discretization technique. Observe that \textbf{NGR} provides the best convergence with respect to the other consider methods. In Figure~\ref{fig:obj_disc_bests}, we depict the best behavior of the considered methods (excluding \textbf{NGR}) for this specific example. It is interesting that \textbf{NGR} still outperforms all other methods.

Consider the three methods \textbf{Struct~I}, \textbf{Struct~II}, and \textbf{NSR} in Figure~\ref{fig:obj_disc_all}. The results depicted in Figure~\ref{fig:obj_disc_all} correspond to the standard parameters involved in each algorithm, i.e., the step size $s = 1/L_f$ for the proposed methods in Corollary~\ref{Cor_1}, and the parameter $k_{\min} = 1$ in NSR. As we saw in Figure~\ref{fig:obj_disc_bests}, these parameters can also be tuned depending on the application at hand. In case of NSR, the role of the parameter $k_{\min}$ is to prevent unnecessary restarting instants that may degrade the overall performance. On the other hand, setting $k_{\min}>1$ may potentially cause the algorithm to lose its monotonicity property. Figure~\ref{fig:obj_disc_NSR} shows how changing $k_{\min}$ affects the performance. The best performance is achieved by setting $k_{\min}=19$ and the algorithm becomes non-monotonic for $k_{\min}>19$. With regards to our proposed methods we observe that if one increases the step size $s$, the performance improves, see Figure~\ref{fig:obj_disc_1} for \textbf{Struct I} and Figure~\ref{fig:obj_disc_2} for \textbf{Struct II}. Moreover, it is obvious that the discrete-time couterparts of \textbf{Struct I} and \textbf{Struct II} behave in a very similar fashion that has to do with the lack of a proper discretization that can fully exploit the properties of the corresponding feedback input, see Remark~\ref{rem_5}.

\begin{figure}[t]
    \centering
	\subfigure[\textbf{NSR}]{\label{fig:obj_disc_NSR}\includegraphics[width=53mm]{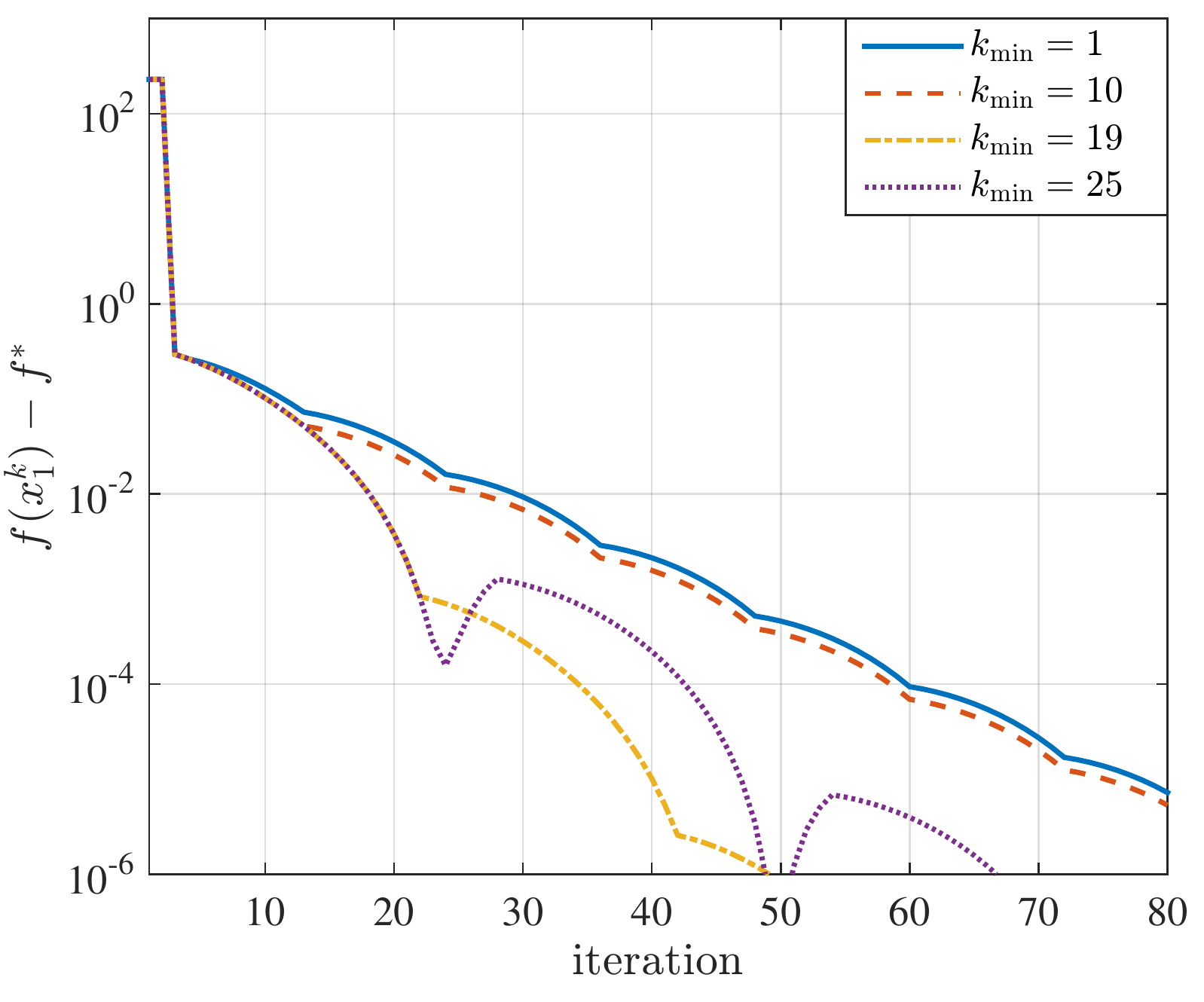}}    
	\subfigure[\textbf{Struct~I}]{\label{fig:obj_disc_1}\includegraphics[width=53mm]{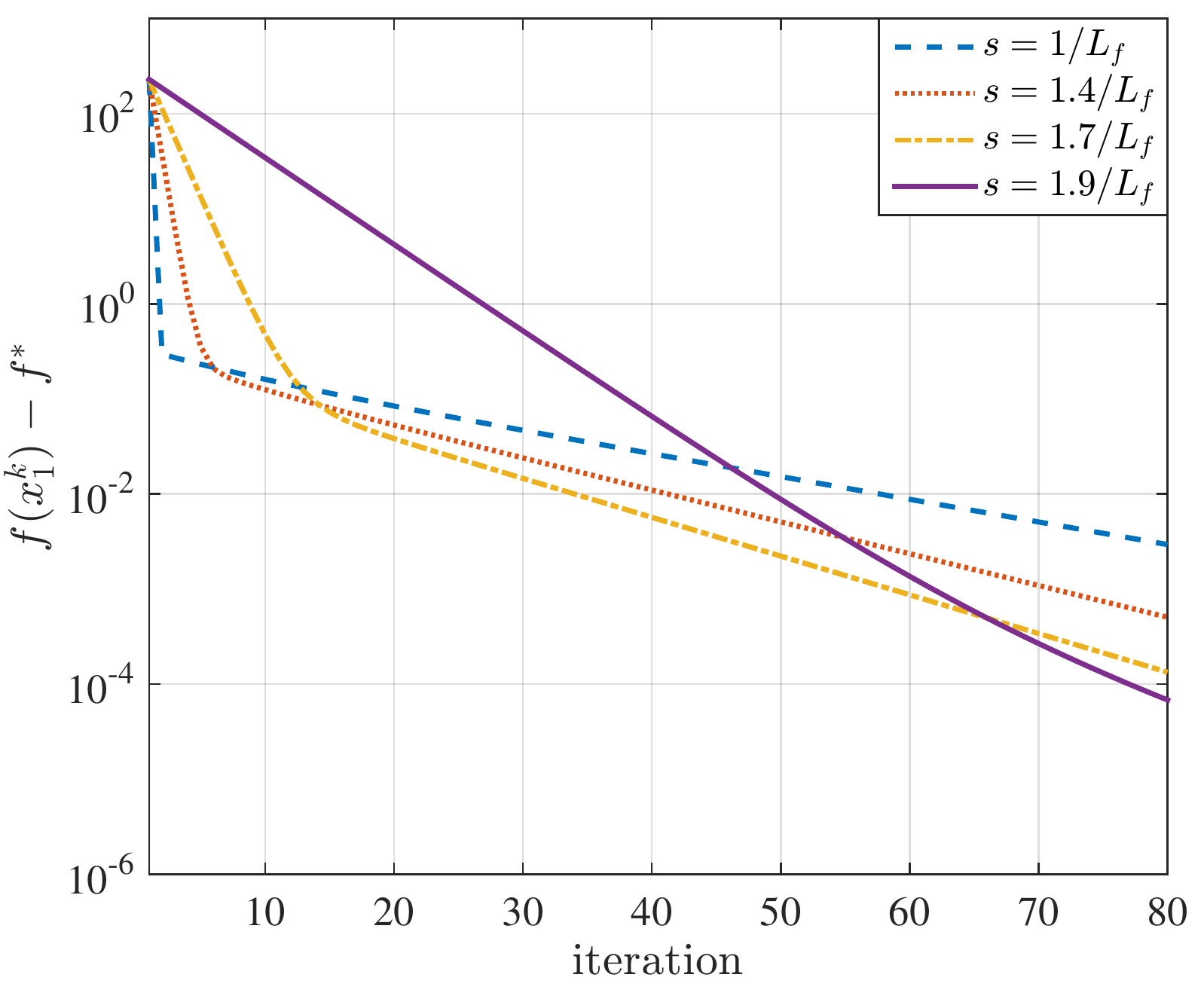}}
	\subfigure[\textbf{Struct~II}]{\label{fig:obj_disc_2}\includegraphics[width=53mm]{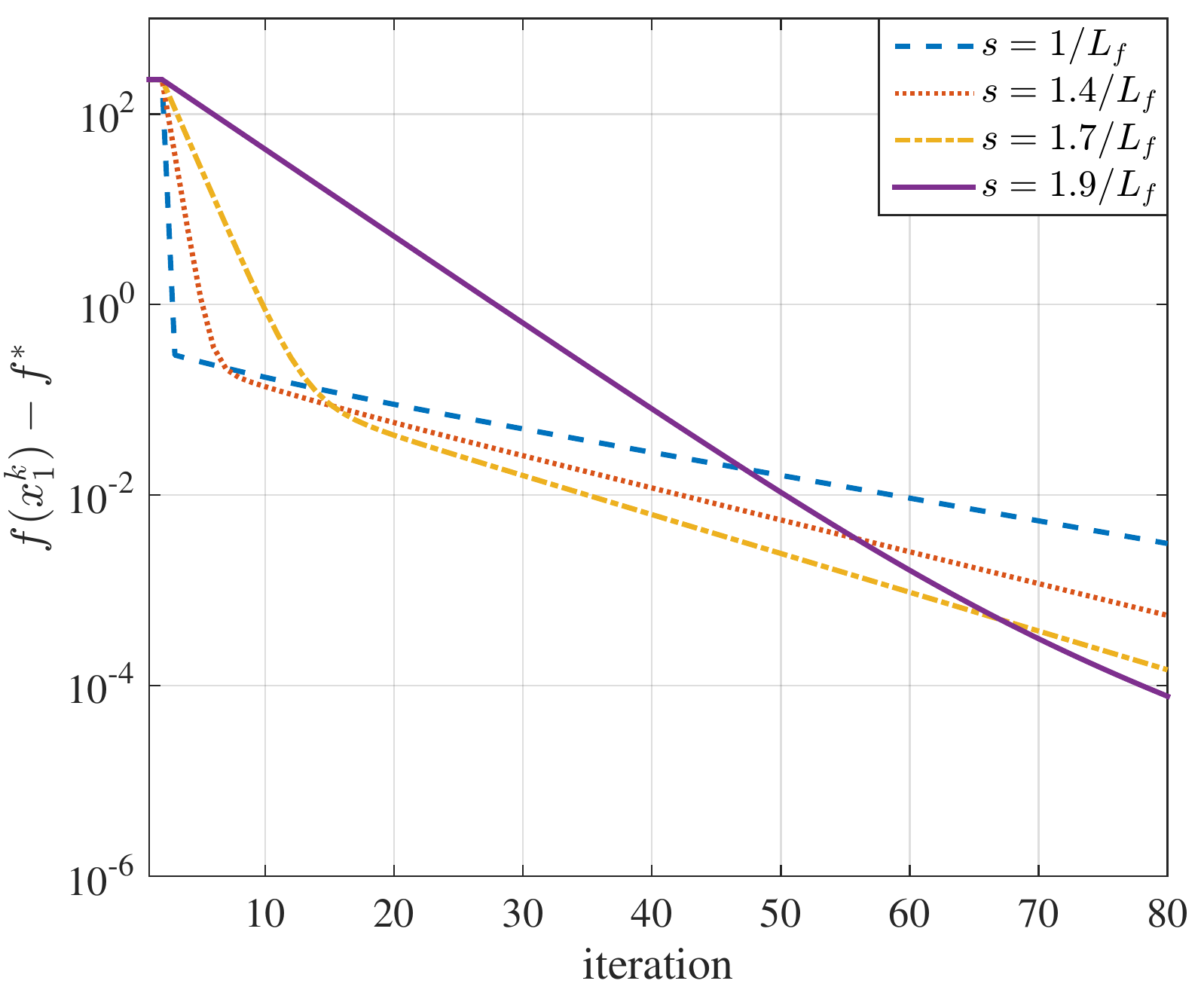}}	
    \caption{Discrete-time dynamics under different tuning parameters.}
    \label{fig:disc_comp_all}
\end{figure}

\section{Conclusions}
\label{sec:conc}
Inspired by a control-oriented viewpoint, we proposed two hybrid dynamical structures to achieve exponential convergence rates for a certain class of unconstrained optimization problems, in a continuous-time setting. The distinctive feature of our methodology is the synthesis of certain inputs in a state-dependent fashion compared to a time-dependent approach followed by most results in the literature. Due to the state-dependency of our proposed methods, the time-discretization of continuous-time hybrid dynamical systems is in fact difficult (and to some extent even more involved than the time-varying dynamics that is commonly used in the literature). In this regard, we have been able to show that one can apply the the forward-Euler method to discretize the continuous-time dynamics and still guarantee exponential rate of convergence. Thus, a more in-depth analysis is due. We expect that because of the state-dependency of our methods a proper venue to search is geometrical types of discretization.

	\bibliographystyle{siam}	
	\bibliography{./mybref}

\end{document}